\newtheorem{theorem}{Theorem}
\newtheorem{definition}[theorem]{Definition}
\newtheorem{corollary}[theorem]{Corollary}
\newtheorem{lemma}[theorem]{Lemma}
\newtheorem{proposition}[theorem]{Proposition}
\newcommand{\V}{\mathcal{V}}
\newcommand{\W}{\mathcal{W}}
\newcommand{\A}{\mathbf{A}}
\newcommand{\Con}{\mathbf{Con }}
\newcommand{\Cg}{\mathrm{Cg}}
\newcommand{\R}{\mathbf{R}}
\newcommand{\U}{\mathcal{U}}
\newcommand{\Free}{\mathbf{F}}
\newcommand{\B}{\mathbf{B}}
\newcommand{\N}{\mathbb{N}}
\newcommand{\Z}{\mathbb{Z}}
\newcommand{\unaryoperations}{r_1,\ldots,r_n}
\newcommand{\rbf}{\mathbf{r}_1,\ldots,\mathbf{r}_n}
\newcommand{\zees}{z,\ldots,z,z}
\newcommand{\exes}{x_1,\ldots,x_k}
\newcommand{\yous}{u_1,\ldots,u_\ell}
\newcommand{\id}{{id}}
\newcommand{\byous}{\mathbf{u}_1,\ldots,\mathbf{u}_\ell}
\newcommand{\bm}{\mathbf{M}}
\newcommand{\bt}{\mathbf{T}}
\newcommand{\bv}{\mathbf{V}}
\newcommand{\Eq}{\mathrm{Eq}}
\newcommand{\fuil}{\mathrm{fi}}
\author{Mateo Muro}
\title{Characterizing Finitely Based Abelian Mal'cev Varieties}
\date{\today}
\begin{document}
\maketitle
\begin{abstract}
    In this paper, we prove the following characterization: an abelian Mal'cev variety is finitely based if and only it has finite type, its ring of idempotent binary terms is finitely presented, and its module of unary terms is finitely presented.
\end{abstract}
\section{Introduction}
In \cite[Theorem 14.9]{MR909290}, Freese and McKenzie proved that every locally finite abelian Mal'cev variety is finitely based. We also know that for an abelian Mal'cev variety $\V$ there exists a ring and module structure on the idempotent binary terms and unary terms in the type of $\V$. We extend these ideas to characterize the finitely based abelian Mal'cev varieties as the abelian Mal'cev varieties with finitely presented rings and modules.

We refer to \cite{MR3793673} for notation and background on general algebras.
A \emph{type} $\sigma$ is a set of function symbols $f$, each associated with a non-negative integer called the \emph{arity of $f$.}
An \emph{algebra} $\A$ of type $\sigma$ is an ordered pair $\langle A, (f^\A\ |\ f\in\sigma)\rangle$. Here, $A$ is a non-empty set, called the \emph{universe} of $\A$. Each $n$-ary $f\in\sigma$ is interpreted as an \emph{$n$-ary basic operation} $f^\A\colon A^n\to A$.
To simplify the notation, we will write $f$ for $f^\A$ if the meaning is clear from the context.

More generally, any term $t$ over $\sigma$ in variables $x_1,\ldots,x_n$ induces an \emph{$n$-ary term operation} $t^\A$ on $A$.
An \emph{equation} over $\sigma$ is an expression of the form $s\approx t$ where $s,t$ are terms over $\sigma.$
We say \emph{$\A$ satisfies $s\approx t$} (written $\A\models s\approx t$) if $s^\A=t^\A$.

A \emph{variety} $\V$ is a class of algebras over a fixed type $\sigma$ that is defined by a set of equations $\Sigma$. Then $\Sigma$ is a basis for $\V$. We say that
$\V$ is \emph{finitely based} if there exists a finite basis for $\V$.

   We say that a variety $\V$ is \emph{Mal'cev} if there exists a term $m(x,y,z)$ in the type of $\V$ such that the following equations hold in $\V$:
    \begin{align*}
        m(x,x,y)\approx y \approx m(y,x,x)
    \end{align*}
    Then, we refer to $m(x,y,z)$ as the \emph{Mal'cev term} of $\V$.
    
    We refer to \cite{MR909290} for definitions and background of commutators of congruences.
    We say that an algebra $\A$ is \emph{abelian} if $[1_\A,1_\A]=0_\A$ where $0_\A,1_\A$ are the trivial and total congruences of $\A$, respectively. A variety $\V$ is abelian if every $\A\in\V$ is abelian. 

    Let $\A$ be an algebra of type $\sigma$ and let $s(x_1,\ldots,x_n),t(x_1,\ldots,x_k)$ be terms over $\sigma$. We say $s$ and $t$ commute in $\A$ if and only if
    \begin{align*}
        \A\models s\bigl(&t(x_{1,1},x_{1,2},\ldots,t_{x_1,k}),\ldots,t(x_{n,1},x_{n,2},\ldots,x_{n,k})\bigl)\\
        &\approx t\bigl(s(x_{1,1},x_{2,1},\ldots,x_{k,1}),\ldots,s(x_{1,k},x_{2,k},\ldots,x_{n,k})\bigl).
    \end{align*}

    \begin{proposition}\label{ab mal alg} \cite[Proposition 5.7]{MR909290}
        An algebra $\A$ with Mal'cev term $m$ is abelian if and only if $m$ commutes with every basic operation of $\A$. 
        \end{proposition}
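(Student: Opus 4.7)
The plan is to prove both directions via the term condition (TC) characterization of $[1_\A, 1_\A] = 0_\A$: $\A$ is abelian if and only if for every term $t(\bar{u}, \bar{v})$ and all tuples $\bar{a}, \bar{b}, \bar{c}, \bar{d}$,
\[
t(\bar{a}, \bar{c}) = t(\bar{a}, \bar{d}) \implies t(\bar{b}, \bar{c}) = t(\bar{b}, \bar{d}).
\]

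For the implication that commutation of $m$ with every basic operation implies abelianness, I would first extend commutation with basic operations to commutation with every term operation by an easy induction on term complexity. Then to verify TC, starting from $t(\bar{a}, \bar{c}) = t(\bar{a}, \bar{d})$, I would rewrite
\[
t(\bar{b}, \bar{c}) = m(t(\bar{b}, \bar{c}), t(\bar{a}, \bar{c}), t(\bar{a}, \bar{c})) = m(t(\bar{b}, \bar{c}), t(\bar{a}, \bar{c}), t(\bar{a}, \bar{d}))
\]
using the Mal'cev identity and the hypothesis, and then pull $m$ through $t$ by commutation; the inner occurrences of $m$ collapse via Mal'cev, yielding $t(\bar{b}, \bar{d})$.

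For the converse, assume $\A$ is abelian and let $f$ be $k$-ary basic. Set
\[
L := m(f(\bar{x}), f(\bar{y}), f(\bar{z})), \qquad R := f(m(x_1, y_1, z_1), \ldots, m(x_k, y_k, z_k));
\]
the goal is the identity $L = R$. The plan is to introduce an auxiliary variable $u$ and work with the term $P(\bar{y}; \bar{x}, \bar{z}, u) := m(L, R, u)$, with $\bar{y}$ in the outer TC role. A direct Mal'cev computation shows the identity $P(\bar{a}; \bar{a}, \bar{c}, u_0) = u_0$, so in particular $P(\bar{a}; \bar{a}, \bar{c}, u_0) = P(\bar{a}; \bar{a}, \bar{c}', u_0)$ for all $\bar{c}, \bar{c}'$. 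Applying TC propagates this equality to arbitrary outer $\bar{b}$, showing that $P(\bar{y}; \bar{x}, \bar{z}, u)$ does not depend on the $\bar{z}$-slot. Specializing $\bar{z} := \bar{y}$ and computing $P(\bar{y}; \bar{x}, \bar{y}, u) = u$ directly via Mal'cev then gives $m(L, R, u) = u$ as an identity; setting $u := R$ and invoking $m(a, b, b) = a$ forces $L = R$.

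The hard part is the converse direction. TC only produces equalities between specific term values, not identities directly, so the proof must package the desired identity as a value-equation amenable to TC. The auxiliary variable $u$ and the term $m(L, R, u)$ are the key device for this encoding; once this trick is in place, the rest is routine manipulation of the Mal'cev identities.
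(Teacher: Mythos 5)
The paper does not prove this proposition; it cites it as Proposition~5.7 of Freese and McKenzie (\cite{MR909290}). So there is no in-paper proof to compare against. Evaluating your argument on its own: it is correct. Both directions check out.

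For the direction that commutation of $m$ with every basic operation implies abelianness: the induction that extends commutation from basic operations to all term operations is the standard one (note this also yields that $m$ commutes with itself, since $m$ is a term), and the TC verification
\[
t(\bar{b},\bar{c}) = m\bigl(t(\bar{b},\bar{c}),\, t(\bar{a},\bar{c}),\, t(\bar{a},\bar{d})\bigr) = t\bigl(m(\bar{b},\bar{a},\bar{a}),\, m(\bar{c},\bar{c},\bar{d})\bigr) = t(\bar{b},\bar{d})
\]
is exactly right, using the hypothesis $t(\bar{a},\bar{c}) = t(\bar{a},\bar{d})$ at the second step and the Mal'cev identities at the first and third.

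For the converse, your auxiliary-variable device $P(\bar{y};\bar{x},\bar{z},u) := m(L,R,u)$ is the genuinely nontrivial part and it works as claimed. One has $P(\bar{a};\bar{a},\bar{c},u_0) = m(f(\bar{c}),f(\bar{c}),u_0) = u_0$ independently of $\bar{c}$, so the TC premise $P(\bar{a};\bar{a},\bar{c},u_0) = P(\bar{a};\bar{a},\bar{c}',u_0)$ holds, and TC with $\bar{y}$ in the outer role gives $P(\bar{b};\bar{a},\bar{c},u_0) = P(\bar{b};\bar{a},\bar{c}',u_0)$ for all tuples. Thus $P$ is independent of its $\bar{z}$-argument; specializing $\bar{z}:=\bar{y}$ gives $P = m(f(\bar{x}),f(\bar{x}),u) = u$ identically, and then $u := R$ forces $L = m(L,R,R) = R$. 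This is a clean direct proof. For context, Freese and McKenzie's own argument for their Proposition~5.7 runs through a somewhat different circle of ideas (the structure of $M(1,1)$ and the diagonal of $\A\times\A$); your TC-with-dummy-variable encoding of the target identity is a self-contained alternative that only invokes the definition of the commutator via the term condition, which is arguably more elementary for a reader who already has TC in hand.
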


    For a variety $\V$ and $k\in\mathbb{N}$, let $\Free_\V(\exes,z)$ be the \emph{free algebra in $\V$ over variables $\exes,z$.} Let $F_\V(\exes,z)$ denote the universe of $\Free_\V(\exes,z)$. Then $F_\V(\exes,z)$ represents the $(k+1)$-ary term functions over $\V$ and 
\begin{align*}
    F_\V^{\id}(\exes,z):=\{t\in F_\V(\exes,z)\ |\ t(\zees)= z\}
\end{align*}
represents the $(k+1)$-ary idempotent term functions over $\V$.
 
 For an abelian Mal'cev variety $\V$, we define a ring structure on $F_\V^\id(x,z)$ as on \cite[p. 82]{MR909290}. Later we will prove the following lemma in Section \ref{r&m structure}:

\begin{lemma}\label{small fring}\cite[p.82]{MR909290}
    Let $\V$ be an abelian variety with Mal'cev term $m$. For $s,t\in F_\V(x,z)$ define
    \begin{align*}
         s+t&:=m\bigl(s(x,z),z,t(x,z)\bigl),\\
         s\cdot t&:= s(t(x,z),z)\\
	-s&:= m(z,s(x,z),z).
     \end{align*}
     \begin{enumerate}
     
         \item  $\R_\V:=\langle F_\V^\id(x,z),+,-,\cdot\rangle$ is a ring with identity $x$ and zero $z$.
         
         \item  $\bm_\V:=\langle F_\V(z),+,-,R_\V\rangle$ is an $\R_\V$-module with zero $z$.
         
         \item \label{direct sum} $\langle F_\V(x,z),+,-,R_\V\rangle$ is an $\R_\V$-module isomorphic to the direct sum of the regular $\R_\V$-module and $\bm_\V$.
                 
     	\item\label{ops are ops} Let $a,b,c\in F_\V(x,z)$ and $r\in F_\V^\id(x,z)$. Then $$m(a,b,c)=a-b+c$$ and $$r(a,b)=r\cdot a+(x-r)\cdot b.$$
        
     \end{enumerate}
\end{lemma}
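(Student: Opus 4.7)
The plan is to derive everything from Proposition \ref{ab mal alg} by first upgrading it to the statement that $m$ commutes with every term operation of $\Free_\V(x,z)$, not merely with the basic operations. This extension is a short induction on term complexity: if $t = f(t_1,\ldots,t_n)$ with $f$ basic, and if $m$ already commutes with each $t_i$ and with $f$, then $m$ commutes with $t$. This extended commutation law is the engine that drives every subsequent verification.

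Next I would establish part (1). Closure under $+$, $-$, $\cdot$ is immediate from the Mal'cev identities; for instance $(s+t)(z,z) = m(z,z,z) = z$, so $s+t$ is idempotent whenever $s,t$ are. Multiplication is composition in the first variable, so associativity and the unit laws $x\cdot t = t = t\cdot x$ are routine. For the additive group, I would read off commutativity and associativity of $+$ as instances of ``$m$ commutes with $m$'': each reduces to a shuffle of three copies of $m$ whose inner arguments then collapse under the Mal'cev identities. That $z$ is the zero and $-s$ is the additive inverse follows directly from $m(x,x,y)\approx y\approx m(y,x,x)$. Right distributivity $(s+t)\cdot u = s\cdot u + t\cdot u$ is immediate from substituting into the first argument of $m$, while left distributivity $u\cdot(s+t) = u\cdot s + u\cdot t$ uses that the term $u$ commutes with $m$.

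For (2), the same template makes $F_\V(z)$ into an abelian group and makes the action $r\cdot a := r(a(z),z)$ an $\R_\V$-module structure, with module distributivity again coming from commutation of $r$ with $m$. For (3), I would exhibit the decomposition explicitly: given $s\in F_\V(x,z)$, set $r_s(x,z) := m(s(x,z),s(z,z),z)$ and $a_s(z) := s(z,z)$. Then $r_s$ is idempotent by $m(x,x,y)\approx y$, $a_s\in F_\V(z)$ by construction, and a short Mal'cev calculation gives $s = r_s + a_s$ in the ambient group; the map $s\mapsto(r_s,a_s)$ is then checked to be an $\R_\V$-module isomorphism onto $\R_\V\oplus\bm_\V$. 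Part (4) is a direct unfolding of definitions: $a-b+c$ expands to a nested $m$ that collapses to $m(a,b,c)$ by the Mal'cev identities and commutation, and $r\cdot a + (x-r)\cdot b$ unfolds and reduces to $r(a,b)$ via commutation of $r$ with $m$ together with idempotence of $r$.

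The main obstacle is combinatorial bookkeeping: most ring and module axioms reduce to specific shuffles of nested $m$'s, and without care these calculations become unreadable. The key discipline is to state the extended commutation identity once, cleanly, and then cite it as a black box inside each axiom check rather than re-deriving it in the middle of every computation.
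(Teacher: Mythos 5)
Your proposal is correct and follows essentially the same line as the paper: verify closure and the ring/module axioms by exploiting commutation with $m$, then exhibit the direct-sum decomposition $s = \bigl(s(x,z)-s(z,z)\bigr) + s(z,z)$. The only organizational differences are that you reprove the abelian-group structure of $\langle F_\V(x,z),+,-,z\rangle$ and the identity $m(a,b,c)=a-b+c$ via the ``$m$ commutes with $m$'' shuffle (the paper simply cites Freese--McKenzie, Lemma 5.6, for both), and you make explicit the routine induction extending Proposition~\ref{ab mal alg} from basic operations to all term operations, which the paper uses implicitly.
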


With that, we have enough to state the main theorem.

\begin{theorem}\label{the theorem}
    Let $\V$ be an abelian Mal'cev variety. Then $\V$ is finitely based if and only if
    \begin{enumerate}
        \item $\V$ has finite type,
        \item the ring $\R_\V$ of binary idempotent terms is finitely presented,
        \item and the $\R_\V$-module $\bm_\V$ of unary terms is finitely presented.
    \end{enumerate}
\end{theorem}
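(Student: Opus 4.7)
The core idea is to extend Lemma \ref{small fring} from the two-variable case to arbitrary arities, establishing a canonical linear form for all terms. Iterating Proposition \ref{ab mal alg}---the Mal'cev term commutes with every basic operation, hence with every term---I would show that every $(n+1)$-ary term $t(x_1,\ldots,x_n,z)$ equals in $\V$ a unique expression
\[
r_1\cdot x_1 + r_2\cdot x_2 + \cdots + r_n\cdot x_n + c
\]
with $r_i\in\R_\V$ and $c\in\bm_\V$. This puts equations valid in $\V$ in bijection with pairs of equations in $\R_\V$ and $\bm_\V$, bridging the equational theory of $\V$ with the ring/module data.

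\textbf{Backward direction.} Assuming (1)--(3), I would construct an explicit finite basis $\Sigma$ consisting of (a) the Mal'cev equations for $m$; (b) for each of the finitely many basic operations $f$ (using (1)), the equation asserting that $m$ commutes with $f$, which by Proposition \ref{ab mal alg} forces $\V$ to be abelian; (c) for each basic $f$, an equation writing $f$ in its canonical linear form using chosen generators of $\R_\V$ and $\bm_\V$; and (d) the finitely many defining relations of $\R_\V$ as a ring and $\bm_\V$ as an $\R_\V$-module (by (2), (3)), each translated to an equation in the original type via (c). Items (a)--(d) are finite. To verify $\Sigma$ axiomatizes $\V$, I would take any equation $s\approx t$ valid in $\V$, apply (a)--(c) to rewrite both sides in canonical form, and apply (d) to deduce the two canonical forms are equal.

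\textbf{Forward direction and main obstacle.} Assume $\V$ has a finite basis $\Sigma$. For (1), each basic operation must commute with $m$ by Proposition \ref{ab mal alg}; the commutation equation explicitly involves that operation and is not derivable from $\Sigma$ unless the operation appears in $\Sigma$, forcing the type to be finite. For (2) and (3), the canonical-form theorem identifies generators of $\R_\V$ and $\bm_\V$ with data extracted from basic operations, so (1) yields finite generation, and defining relations come from translating the equations of $\Sigma$ via the canonical form, giving finitely many. The main obstacle is completeness of this presentation: any relation valid in $\R_\V$ or $\bm_\V$ corresponds to an equation valid in $\V$ and hence must be derivable from $\Sigma$. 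I would argue this by induction on the length of an equational derivation in $\V$, showing that each step converts into a consequence of the chosen ring or module relations after passing to canonical forms. Making the canonical-form construction uniform and effective across all arities, together with this completeness argument, is where the most care will be needed.
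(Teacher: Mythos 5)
Your high-level strategy — decompose every term into a canonical $\R_\V$-linear form in its variables plus a $\bm_\V$-constant, then transfer the equational theory of $\V$ to ideal/submodule data — is indeed the skeleton of the paper's argument. But the part you flag as ``where the most care will be needed'' is in fact the entire technical content of the theorem, and your proposal does not resolve it; it only names it. Two concrete things go missing. First, ``canonical linear form'' only makes sense relative to a \emph{free} ring and module, and neither $\R_\V$ nor $\bm_\V$ is free. The paper handles this by first proving (Lemma \ref{term equivalence}) that $\V$ is equivalent to a subvariety $\W$ of a universal variety $\U$ whose ring $\R_\U$ and module $\bm_\U$ \emph{are} free (Lemma \ref{Fring}.\ref{free ring}, \ref{free submodule}), and then working with the kernel $\theta$ of $\Free_\U(x,z)\twoheadrightarrow\Free_\W(x,z)$. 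Without this detour, ``the coefficients $r_i$ and $c$'' of your canonical form are not canonical terms, only canonical \emph{elements}, and the translation of ring/module relations back into equations in the original type (your step (d)) is ill-posed.

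Second, and more importantly, the correspondence between fully invariant congruences of $\Free_\U(x,z)$ and pairs $(I,N)$ of ideal/submodule is not a bare bijection: the paper's Lemma \ref{Fring}.\ref{U con is module con} shows that $(I,N)$ must additionally satisfy $IM_\U\subseteq N$ and, crucially, $f(x)-f(z)-x\in I$ for all $f(z)\in N$. This last condition couples the module relations to the ring ideal in a way that your ``pairs of equations in $\R_\V$ and $\bm_\V$'' framing does not anticipate, and it is exactly what makes the finite-generation transfer work: Lemma \ref{Fring}.\ref{finite congruence} shows $\theta$ is finitely generated iff $I$ is finitely generated \emph{and $N/IM_\U$ (not $N$ itself) is finitely generated}. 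Your appeal to ``induction on the length of an equational derivation'' does not engage with either of these subtleties, nor with the reduction of arbitrary-arity equations to the two-variable case (the paper's Lemma \ref{fin basis fin con}, via Lemma \ref{basis lemma} and Birkhoff's correspondence in Corollary \ref{rel BS1412}). So the forward implication in your proposal has a genuine gap that cannot be filled by the derivation-length induction as stated; it needs the structural lemmas the paper develops.
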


For example, the variety of abelian groups has a ring of binary idempotent terms isomorphic to $\Z$ and has $\Z$-module of unary terms isomorphic to $\Z$, both of which are finitely presented, in fact free. Hence, our result proves the already known fact that the variety of abelian groups is finitely based \cite{MR215899}.
In general, if $\R$ is a ring and $\V$ is the variety of $\R$-modules, the ring $\R_\V$ of idempotent binary terms will be isomorphic to $\R$. The result also gives examples of non-finitely based varieties. Consider the ring $\R=\mathbb{Z}[x,y]$ with non-commuting variables $x,y$. Take the ideal $I$ generated by $\{xy^nx\ |\  n\in\mathbb{N}\}$. Then $I$ is not finitely generated and $\R/I$ is not finitely presented. The variety of $\R/I$-modules is equivalent to the variety $\V$ with basic operations $+, -, 0$ and scaling by $x$ and $y$. Since $\R_\V$ is isomorphic to $\R/I$, Theorem \ref{the theorem} yields that $\V$ (and the variety of $\R/I$-modules) is non-finitely based.

We will prove Theorem \ref{the theorem} in Section \ref{final proof}.

\section{The Type of an Abelian Variety}

In this section we show that abelian Mal'cev varieties are essentialy determind by their binary term functions. This will be used later in the paper.

We begin with a lemma that simplifies proving identities in an abelian Mal'cev variety.

\begin{lemma}\label{basis lemma}
     Let $\V$ be an abelian variety of type $\sigma$ with Mal'cev term $m$. Let $s(\exes,z),t(\exes,z)$ be $(k+1)$-ary terms over $\sigma$. Define $s_0(z):=s(z,\ldots,z).$ For $1\leq i\leq k$, define $s_i(x,z):=s(z,\ldots,z,x,z,\ldots,z),$ where $x$ appears in the $i$-th position.
     \begin{enumerate}
         \item\label{malcev decomp}  Then \begin{align*}
             \V&\models t(x_1,\ldots,x_k,z)\approx\\ &m\biggl(\ldots m\Bigl(m\bigl(t_1(x_1,z),t_0(z),t_2(x_2,z)\bigl),t_0(z),t_3(x_3,z)\Bigl),\ldots,t_0(z),t_k(x_k,z)\biggl)
         \end{align*}
         \item\label{easy equiv} Also, $\V\models s(\exes,z)\approx t(\exes,z)$ if and only if $\V\models s_i\approx t_i$ for all $0\leq i\leq k$. 
     \end{enumerate}
 \end{lemma}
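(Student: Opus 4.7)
The proof hinges on the fact that, in an abelian Mal'cev variety, the Mal'cev term $m$ commutes not only with every basic operation (Proposition \ref{ab mal alg}) but with every term operation. This extension is a routine induction on term complexity: projections commute with $m$ trivially, and a composition of operations that each commute with $m$ again commutes with $m$, via coordinate-wise application of the commutation identity.

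For part (\ref{malcev decomp}), the plan is to start from the right-hand side and repeatedly pull $t$ outside. Each of $t_0(z)$ and $t_i(x_i,z)$ is literally $t$ evaluated at some $(k+1)$-tuple whose entries lie in $\{z, x_i\}$. Applying the commutation identity
\begin{equation*}
m(t(\bar a),t(\bar b),t(\bar c)) \approx t(m(a_1,b_1,c_1),\ldots,m(a_{k+1},b_{k+1},c_{k+1}))
\end{equation*}
from the innermost $m$ outward rewrites the entire nested expression as a single application of $t$, whose $j$-th argument is itself a nested Mal'cev expression in $z$'s and (possibly) one occurrence of $x_j$. In the final coordinate (the $z$-position of every $t_i$), every entry is $z$, so repeated use of $m(x,x,y)\approx y$ collapses it to $z$. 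In the $j$-th coordinate for $1 \leq j \leq k$, all entries are $z$ except a single $x_j$ appearing in the slot coming from $t_j$; the Mal'cev identities $m(x,x,y)\approx y$ and $m(y,x,x)\approx y$ then collapse this expression to $x_j$. Hence the right-hand side simplifies to $t(x_1,\ldots,x_k,z)$.

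Part (\ref{easy equiv}) is then immediate from part (\ref{malcev decomp}). The forward direction is trivial, since each $s_i$ and $t_i$ is obtained from $s$ and $t$ by substituting $z$ for various variables, so $\V \models s \approx t$ yields $\V \models s_i \approx t_i$. Conversely, if $\V \models s_i \approx t_i$ for $0 \leq i \leq k$, then part (\ref{malcev decomp}) expresses both $s$ and $t$ as the same nested Mal'cev expression in the corresponding $s_i$'s and $t_i$'s; since these agree component-by-component in $\V$, we conclude $\V \models s \approx t$.

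The main technical obstacle is the extension of commutativity from basic operations to arbitrary terms, but this is a standard inductive argument (and implicit in \cite{MR909290}). The remainder is essentially bookkeeping: tracking which coordinate of $t$ each variable $x_i$ appears in, pulling $t$ through each layer of $m$, and invoking the Mal'cev identity to eliminate the resulting towers of $z$'s.
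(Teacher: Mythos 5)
Your proof is correct and follows essentially the same approach as the paper: both rest on the commutativity of $m$ with the term operation $t$ (a routine term-induction extension of Proposition~\ref{ab mal alg}), and both use the Mal'cev identities to collapse the resulting towers of $z$'s coordinatewise. The only differences are cosmetic --- the paper proves part~(\ref{malcev decomp}) by induction on $k$, peeling one variable off the left-hand side at a time, whereas you unfold the nested right-hand side directly by pulling $t$ outward through each $m$ --- and you explicitly flag the extension of commutativity from basic to arbitrary term operations, which the paper invokes without comment.
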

 \begin{proof}
    We prove the first item by induction on $k$.
    The base case of $k=0$ is trivial.
    Now assume the claim has been proven for $n$-ary terms.
    Since $m$ is a Mal'cev term and commutes with every term operation by Proposition \ref{ab mal alg}, the variety $\V$ satisfies
    \begin{align*}
        t(x_1,\ldots,x_n,z)&\approx 
        t\bigl(m(x_1,z,z),m(x_2,z,z),\ldots,m(x_{n-1},z,z),m(z,z,x_n),m(z,z,z)\bigl)\\
        &\approx m\bigl(t(x_1,\ldots,x_{n-1},z,z)t(\zees),t(z,z,\ldots,z,x_n,z)\bigl)\\
        &= m\bigl(t(x_1,\ldots,x_{n-1},z,z),t_0(z),t_n(x_n,z)\bigl)\\
        &\approx m\biggl(\ldots m\Bigl(m\bigl(t_1(x_1,z),t_0(z),t_2(x_2,z)\bigl),t_0(z),t_3(x_3,z)\Bigl),\ldots,t_0(z),t_n(x_n,z)\biggl).
    \end{align*}
    The last equation is by the inductive hypothesis. So we have proven the first item.
    
    The second item follows immediately.
 \end{proof}

We use the definition of equivalent varieties from \cite{MR3793673}. Let $\V$ and $\W$ be varieties of respective types $\sigma$ and $\rho$. By an \emph{interpretation} of $\V$ in $\W$ is meant a mapping $D$ from $\sigma$ to the set of terms over $\rho$ such that:
\begin{enumerate}
    \item For $f\in \sigma$, if $f$ is an $n$-ary basic operation for $n>0$, then $D(f)=:f_D$ is an $n$-ary $\rho$-term.
    \item For $f\in \sigma$, if $f$ is a 0-ary basic operation, then $D(f)=:f_D$ is a 1-ary $\rho$-term such that the equation $f_D(x_1)\approx f_D(x_2)$ holds in $\W$.
    \item For every algebra $\A\in\W$, the algebra $\A^D:=\langle A, f^\A_D(f\in \sigma)\rangle$ is in $\V$.
\end{enumerate}

    We say two varieties $\V,\W$ are \emph{equivalent} if there exists two interpretations, $D$ of $\V$ in $\W$ and $E$ of $\W$ in $\V$ such that $\A^{DE}=\A$ for all $A\in\W$ and $\mathbf{B}^{ED}=\mathbf{B}$ for all $\mathbf{B}\in\V$.

    It will be convenient to find normal forms for terms in an abelian Mal'cev variety. Then we can consider the equivalent variety over the normal forms instead. We first define a variety $\U$ such that every abelian Mal'cev variety of finite type is equivalent to a subvariety of $\U$.

 \begin{definition}\label{C}
 Let $\U$ be the variety of type $\{\yous,\unaryoperations,m\}$ where $\ell\leq n $, $\yous$ are unary operations, $\unaryoperations$ are binary operations, and $m$ is a ternary operation defined by
 \begin{enumerate}
     \item \label{U1} $m(x,z,z)\approx m(z,z,x)\approx x$,
     \item \label{U2}$m\bigl(m(x_1,x_2,x_3),m(y_1,y_2,y_3),m(z_1,z_2,z_3)\bigl)\approx m\bigl(m(x_1,y_1,z_1),m(x_2,y_2,z_2),m(x_3,y_3,z_3)\bigl)$
     \item \label{U3} $r_i(z,z)\approx z$ for all $i\leq n $,
     \item \label{U4} $m\bigl(r_i(x_1,x_2),r_i(y_1,y_2),r_i(z_1,z_2)\bigl)\approx r_i\bigl(m(x_1,y_1,z_1),m(x_2,y_2,z_2)\bigl)$ for all $i \leq n$,
     \item \label{U5} $m\bigl(u_i(x),u_i(y),u_i(z)\bigl)\approx u_i\bigl(m(x,y,z)\bigl)$ for all $i\leq\ell,$
     \item \label{U6} $m(u_i(x),u_i(z),z)\approx r_i(x,z)$ for all $i\leq\ell.$
 \end{enumerate}
\end{definition}

 Now we prove that every term in an abelian Mal'cev variety decomposes into a composition of the Mal'cev term, binary idempotent terms, and unary terms. Said decomposition gives an equivalent variety $\W$ contained in $\U$ when the type is finite.
 
 \begin{lemma}\label{term equivalence}
     Let $\V$ be an abelian variety of type $\sigma$ with Mal'cev term $m$.
     \begin{enumerate}
         \item Then $\V$ is equivalent to a variety $\W$ of type $F_\V(z)\cup F_\V^\id(x,z)\cup\{m\}.$
         \item If $\sigma$ is infinite, then $\V$ cannot be finitely based.
         \item If $\sigma$ is finite, then there exists $\ell\leq n\in\mathbb{N}$ and $\yous\in F_\V(z)$, $\unaryoperations\in F_\V^\id(x,z)$ such that $\V$ is equivalent to a subvariety $\W$ of $\U$ of type $\{\yous,\unaryoperations,m\}$ as in Definition \ref{C}.
     \end{enumerate}
 \end{lemma}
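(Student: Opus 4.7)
The proof breaks into the three items. For (1), the plan is to produce a normal form for every term of $\V$ and build $\W$ from it. By Lemma \ref{basis lemma}(\ref{malcev decomp}), any $(k+1)$-ary term $t(\exes,z)$ is an iterated Mal'cev combination of the unary term $t_0(z)$ and the binary slices $t_i(x,z)$. Each slice further splits as $t_i(x,z)\approx m(r_i(x,z),z,t_0(z))$ with $r_i(x,z):=m(t_i(x,z),t_0(z),z)$ idempotent (since $r_i(z,z)=z$) and $t_0(z)$ unary; the identity is the ring calculation $(t_i-t_0)+t_0=t_i$ via Lemma \ref{small fring}(\ref{ops are ops}). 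Define $\W$ over $\rho:=F_\V(z)\cup F_\V^\id(x,z)\cup\{m\}$ with equations consisting of all $\rho$-equations whose $\sigma$-expansions hold in $\V$. Setting $D(f)$ to be the normal form of $f\in\sigma$ and $E$ to send each basic $\rho$-symbol to the $\sigma$-term it names, the two round-trip identities $\A^{DE}=\A$ and $\B^{ED}=\B$ follow because normal-forming and $E$-expansion are mutually inverse up to $\V$-equivalence.

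For (2), I argue by contradiction. Assume $\V$ has a finite basis $\Sigma$ and let $\sigma_0\subseteq\sigma$ be the finite set of operation symbols appearing in $\Sigma$. Since $\sigma$ is infinite, pick $f\in\sigma\setminus\sigma_0$ of some arity $n$. For any $\A\in\V$ and any $n$-ary operation $g$ on $A$, reinterpreting $f^\A$ as $g$ yields $\A'$ that satisfies $\Sigma$ (because $f\not\in\sigma_0$) and hence lies in $\V$; by Proposition \ref{ab mal alg}, $g$ must commute with $m^\A$. So every $n$-ary operation on every $\A\in\V$ commutes with $m^\A$. To contradict this, take a nontrivial $\A\in\V$ and pass to $\A^2$ if necessary to ensure $|A|\geq 3$. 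The operations commuting with $m^\A$ are precisely the \emph{affine} ones for the induced abelian group structure (group-additive maps plus a constant), forming a strictly smaller subset of the $|A|^{|A|^n}$ total $n$-ary operations whenever $|A|\geq 3$ or $n\geq 2$; any non-affine operation then supplies the counterexample.

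For (3), when $\sigma$ is finite the normal-form construction in (1) uses only finitely many unary and idempotent binary terms. Enumerate them as $\yous\in F_\V(z)$ and $\unaryoperations\in F_\V^\id(x,z)$ with $\ell\leq n$, arranged so that $r_i(x,z)=m(u_i(x),u_i(z),z)$ for each $i\leq\ell$ (adjoining any missing companions and letting the remaining $r_{\ell+1},\ldots,r_n$ be the idempotent binaries from the decomposition that are not of this form). The equivalent variety $\W$ from (1) then has type $\{\yous,\unaryoperations,m\}$, and the defining equations of $\U$ all hold in it: (U1) and (U3) are Mal'cev and idempotency; (U2), (U4), (U5) are instances of Proposition \ref{ab mal alg} asserting that $m$ commutes with every basic operation; (U6) is the very definition of $r_i$ for $i\leq\ell$. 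The main obstacle is (2): one has to exhibit a concrete non-commuting operation on a concrete algebra, and the unary-arity case on a two-element algebra forces first passing to a direct power before the counting argument separates affine from arbitrary operations.
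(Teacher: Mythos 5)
Your proof is correct and follows the paper's strategy (normal forms via the Mal'cev decomposition, interpretations $D$ and $E$, then checking the $\U$-equations). Two places where you are more careful than the paper's own write-up, and where the extra care is actually needed: in part (1), you further split each slice $t_i(x,z)$ into an idempotent piece $r_i(x,z)=m(t_i(x,z),t_0(z),z)$ plus the unary $t_0(z)$; the paper's stated $D(f)$ uses $t_i(x_i,z)$ directly, which is not idempotent when $t_0(z)\neq z$, so your decomposition is in fact required for $D(f)$ to be a term over the type $F_\V(z)\cup F_\V^\id(x,z)\cup\{m\}$. In part (2), you supply the model-theoretic justification for the paper's terse assertion that $B$ cannot prove the commutativity of $m$ with $f$: reinterpret $f$ arbitrarily to get a model of the finite basis, then contradict by counting affine versus arbitrary operations. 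Two small caveats there worth stating explicitly: when choosing $f\in\sigma\setminus\sigma_0$, also require that $f$ does not occur in the Mal'cev term $m$ (possible since $\sigma$ is infinite and $m$ involves only finitely many symbols), so that reinterpreting $f$ leaves $m^\A$ unchanged; and the argument needs $\V$ nontrivial and needs $f$ to have positive arity (for a $0$-ary $f$ every constant trivially commutes with $m$), tacit assumptions the paper's proof shares.
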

 
 \begin{proof}
     1. Let $\W$ be a variety of type $\rho:=F_\V(z)\cup F_\V^\id(x,z)\cup\{m\}.$
     Define $E$ to be an interpretation of $\W$ in $\V$ as follows:
     For $f\in\rho$, let $E(f)$ be a term over $\sigma$ that induces $f$ in $\Free_\V(x,y,z).$
     For $\A\in\V$, let $\A^E$ be the algebra with universe $A$, unary operations $u^\A(z)$ for $u\in F_\V(z),$ binary operations $r^\A(x,z)$ for $r\in F_\V^\id(x,z),$ and ternary operation $m^\A(x,y,z)$ for the Mal'cev term $m$ of $\V$.
     Here we slightly abuse notation when writing $u^\A$  for the term function on $\A$ that is induced by the term $u$ over $\sigma$ which induces the element $u(z)\in F_\V(z)$. Similarly for $r^\A$.
     Then $E$ is an interpretation of $\V$ in $\W:=\{\A^E\ |\ \A\in\V\}$.
     
     Conversely, define $D$ to be an interpretation of $\V$ in $\W$ as follows:
     Let $f\in \sigma$.
     In the case that $f$ is 0-ary, let $t(z)\in F_\V(z)$ be any term such that $t(z)=f$ in $F_\V(z)$.
     In the case that $f(x_1,\ldots,x_k,z)$ is $(k+1)$-ary, define $f_0(z):=f(z,\ldots,z)$ and $f_i(x):=f(z,\ldots,z,x,z,\ldots,z)$, where $x$ appears in the $i$-th position. Then define $$D(f):=m\biggl(\ldots m\Bigl(m\bigl(t_1(x_1,z),t_0(z),t_2(x_2,z)\bigl),t_0(z),t_3(x_3,z)\Bigl),\ldots,t_0(z),t_k(x_k,z)\biggl).$$

     Then $D$ is an interpretation of $\W$ in $\V$ by Lemma \ref{basis lemma}.\ref{malcev decomp}. Furthermore, for any algebra $\A^E\in\W$, the algebra $\A^{ED}=\A$ and hence in $\V$.

    It follows that for all $\A\in\V$, we have $\A^{ED}=\A\in\V$ and $\A^{EDE}=\A^E\in\W$. The latter means that $\B^{DE}=\B$ for all $\B\in\W.$ Thus $\V$ and $\W$ are equivalent.

     2. Suppose that $\V$ has infinite type and suppose by way of contradiction that $B$ is a finite basis for $\V$. Then there exists a basic operation $f\in\sigma$ such that $f$ does not occur in $B$. Say $f$ has arity $k$. Since $\V$ is abelian Mal'cev, by Proposition \ref{ab mal alg}, we know that \begin{align*}
         \V\models  &\ m\bigl(f(x_1,\ldots,x_k),f(y_1,\ldots,y_k),f(z_1,\ldots,z_k)\bigl)\\&\approx f\bigl(m(x_1,y_1,z_1),\ldots,m(x_k,y_k,z_k)\bigl).
     \end{align*} However, $B$ cannot prove this fact since $f$ never occurs in $B$, contradicting $B$ being a basis.
     
     3. If $\sigma$ is finite, you need finitely many unary terms and idempotent binary terms with $m$ to interpret every basic operation in $\sigma$. Let us denote said unary terms as $u_1,\ldots,u_\ell$, where $\ell\in\mathbb{N}$. Now define $r_i(x,z)=m(u_i(x),u_i(z),z)$ for $i\leq \ell$. Let $r_i(x,z)$ be the rest of the needed idempotent binary terms for $\ell<i\leq n$, where $n\geq\ell$. Then we need to show that the terms $u_1,\ldots,u_\ell,r_1,\ldots,r_n$ satisfy Definition \ref{C}.\ref{U1}-\ref{U6}. That would complete the proof.

     Equation \ref{U1} is satisfied because $m$ is assumed to be a Mal'cev term. Equations \ref{U2},\ref{U4}, and \ref{U5} are satisfied by Proposition \ref{ab mal alg}. Equation \ref{U3} is satisfied since $r_i\in F_\V^\id(x,z)$ for $i\leq n$. Lastly, Equation \ref{U6} holds by construction.
 \end{proof}

\section{Ring and Module Structure}\label{r&m structure}

We now turn our attention to the ring and module structures in abelian Mal'cev varieties. We first prove Lemma \ref{small fring}.

\begin{proof}[Proof of Lemma \ref{small fring}]
    We prove that $\R_\V$ is a ring and that $\bm_\V$ is an $\R_\V$-module by collecting proofs of properties of $+,-,\cdot$. \cite[Lemma 5.6]{MR909290} already shows that $\langle F_\V(x,z),+,-,z\rangle$ is an abelian group. Let $a,b,c\in F_\V(x,z)$ and $r,s\in F_\V^\id(x,z).$
    
    Now we show that $\langle F_\V^\id(x,z), +,-\rangle$ is a subgroup. Clearly, $z\in F_\V^\id(x,z)$. We have
    \begin{align*}
        r(z,z)-s(z,z)=z-z=z
    \end{align*}
    So $r-s\in F_\V^\id(x,z)$ and hence $\langle F_\V^\id(x,z),+,-,z\rangle$ is an abelian group.
    
   Multiplication is associative since
     \begin{align*}
         (a\cdot b)\cdot c= a\Bigl(b\bigl(c(x,z),z\bigl)\Bigl)=a\cdot(b\cdot c).
     \end{align*}
     
     We show now that multiplication distributes over addition if the left factor is in $F_\V^\id(x,z)$. We have, since $r$ is idempotent and commutes with $m$ by Proposition \ref{ab mal alg}, that
    \begin{align*}
        r(a+b)&=r\bigl(m(a,z,b),z\bigl)\\
        &= r\bigl(m(a,z,b),m(z,z,z)\bigl)\\
        &= m\bigl(r(a,z),r(z,z),r(b,z)\bigl)\\
        &= m\bigl(r(a,z),z,r(b,z)\bigl)\\
        &=ra+rb.
    \end{align*}
    
    Now we show right distributivity. We have
    \begin{align*}
        (a+b)\cdot c&= m\Bigl(a\bigl(c(x,z),z\bigl),z,b\bigl(c(x,z),z\bigl)\Bigl)\\
        &=a\cdot c +b\cdot c.
    \end{align*}

    Clearly $1(x,z):=x$ is a multiplicative identity. So $\R_\V$ is a ring and $F_\V(x,z)$ forms an $\R_\V$-module with submodules $F_\V^\id(x,z)$ and $F_\V(z)$. To see \ref{direct sum}, note that every element $a$ in $F_\V(x,z)$ can be written uniquely as a sum of an idempotent term and a unary term in the following way
    \begin{align*}
        a(x,z)&= \bigl(a(x,z)-a(z,z)\bigl)+a(z,z).
    \end{align*}

    \ref{ops are ops}. We have by \cite[Lemma 5.6]{MR909290} that $m(a,b,c)=a-b+c.$ Let $a,b\in F_\V(x,z)$ and $r\in F_\V^\id(x,z)$. 
    Then, we have, using the idempotence of $r$,
	\begin{align*}
	r(a,b)
        &= r\bigl( m(a,b,b), m(z,z,b) \bigl)\\
        &= m\bigl( r(a,z),r(b,z),r(b,b) \bigl)\\
        &= m\bigl( r(a,z),r(b,z),b \bigl)\\
        &= r\cdot a-r\cdot b+b\\
        &= r\cdot a+(x-r)\cdot b.
    \end{align*}
    
\end{proof}

\begin{lemma}\label{ab mal algebra unique}
    Let $\V$ be an abelian Mal'cev variety with Mal'cev terms $m$ and $m'$. Then $\V\models m(x,y,z)\approx m'(x,y,z)$.
\end{lemma}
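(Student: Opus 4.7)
The plan is to combine the commutativity of $m$ with all term operations (from Proposition \ref{ab mal alg}) with an auxiliary identity $m(x,y,y) \approx x$ that follows from the abelian group structure in Lemma \ref{small fring}.

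First I would record the identities $\V \models m(x, y, y) \approx x$ and $\V \models m'(x, y, y) \approx x$. By Lemma \ref{small fring}.\ref{ops are ops}, for any $a, b, c \in F_\V(x, z)$ we have $m(a, b, c) = a - b + c$ in the abelian group $\langle F_\V(x, z), +, -, z \rangle$ with identity $z$. Setting $a = x$ and $b = c = z$ yields $m(x, z, z) = x$ in the free algebra $F_\V(x, z)$, so $\V \models m(x, z, z) \approx x$, and renaming $z \to y$ gives $\V \models m(x, y, y) \approx x$. The same argument applied with $m'$ in place of $m$ (Lemma \ref{small fring} is valid for any Mal'cev term of an abelian Mal'cev variety) gives $\V \models m'(x, y, y) \approx x$.

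Second, by Proposition \ref{ab mal alg}, $m$ commutes with every basic operation of each $\A \in \V$, and a routine induction on term complexity extends this to every term operation; in particular, $m$ commutes with $m'$, giving
\[
m\bigl(m'(a_{11}, a_{12}, a_{13}), m'(a_{21}, a_{22}, a_{23}), m'(a_{31}, a_{32}, a_{33})\bigr) \approx m'\bigl(m(a_{11}, a_{21}, a_{31}), m(a_{12}, a_{22}, a_{32}), m(a_{13}, a_{23}, a_{33})\bigr).
\]

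Third, I would specialize this commutativity at the $3 \times 3$ matrix
\[
\begin{pmatrix} x & y & y \\ y & y & y \\ y & y & z \end{pmatrix}.
\]
The LHS, with $m'$ applied to rows, becomes $m(m'(x,y,y), m'(y,y,y), m'(y,y,z))$, which reduces to $m(x, y, z)$ using $m'(x,y,y) \approx x$ (auxiliary identity) together with $m'(y,y,y) = y$ and $m'(y,y,z) = z$ (from the Mal'cev axioms for $m'$). Symmetrically, the RHS, with $m$ applied to columns, reduces to $m'(x, y, z)$ using $m(x,y,y) \approx x$ and the corresponding Mal'cev identities for $m$. The desired $m \approx m'$ follows. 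The main obstacle is establishing the auxiliary identities $m(x,y,y) \approx x$ and $m'(x,y,y) \approx x$, as these go beyond the Mal'cev axioms and depend on the abelianness of $\V$ through Lemma \ref{small fring}; once these are available, the specialization of the commutativity identity to a cleverly chosen matrix is immediate.
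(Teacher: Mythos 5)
Your proof is correct and follows essentially the same path as the paper's: specialize the commutativity of $m$ and $m'$ (which comes from Proposition \ref{ab mal alg}) at the $3\times 3$ array whose rows are $(x,y,y)$, $(y,y,y)$, $(y,y,z)$, and simplify both sides using the Mal'cev identities. However, you are mistaken in identifying $m(x,y,y)\approx x$ as ``the main obstacle'' and in deriving it from abelianness via Lemma~\ref{small fring}: that identity is just the defining Mal'cev axiom $m(y,x,x)\approx y$ with variables renamed, so it holds for \emph{any} Mal'cev term with no appeal to abelianness or to the group structure on $F_\V(x,z)$. Abelianness is used only once, in Proposition~\ref{ab mal alg}, to obtain the commutativity of $m$ with $m'$.
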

\begin{proof}
    Using that $m$ and $m'$ commute by Proposition \ref{ab mal alg}, we have
    \begin{align*}
        \V\models m(x,y,z)&\approx m\bigl(m'(x,y,y),m'(y,y,y),m'(y,y,z)\bigl)\\
            &\approx m'\bigl(m(x,y,y),m(y,y,y),m(y,y,z)\bigl)\\
            &\approx m'(x,y,z).
    \end{align*}
    Hence, the two terms induce the same term operation in $\V$.
\end{proof}

We show that for the variety $\U$ as in Definition \ref{C}, the ring $\R_\U$ and the $\R_\U$-module $\bm_\U$ are free. Further, we characterize finitely generated fully invariant congruences of $\Free_\U(x,z)$ in terms of ideals and submodules of $\bm_\U$.

 \begin{lemma} \label{Fring}
    Let $\U$ be as in Definition \ref{C}.
     \begin{enumerate}

        \item\label{U ab mal} $\U$ is an abelian Mal'cev variety.
 
        \item \label{free ring}   The ring $\R_\U$ is free over generators $\unaryoperations.$

        \item\label{free submodule} The $\R_\U$-module $\bm_U$ is free over generators $u_1,\ldots,$ $u_\ell.$

        \item \label{U con is module con} A congruence $\theta$ on $\Free_\U(x,z)$ is fully invariant if and only if
        \begin{enumerate}
            \item \label{I} $I:=z/\theta\cap R_\U$ is an ideal of $\R_\U$,        
            \item \label{N} $N:=z/\theta\cap M_\U$ is an $\R_\U$-submodule of $\bm_\U$,        
            \item \label{I+N} $z/\theta=I+N$,
            \item \label{IM} $IM_\U\subseteq N$,
            \item \label{f(x)-f(z)} and $f(x)-f(z)-x\in I$ for all $f(z)\in N$.
        \end{enumerate}

        \item \label{finite congruence} A fully invariant congruence $\theta$ on $\Free_\U(x,z)$ is finitely generated if and only if 
        \begin{enumerate}
            \item the ideal $I:=z/\theta\cap R_\U$ of $\R_\U$ is finitely generated and
            \item for $N:=z/\theta\cap M_\U$ the $\R_\U$-module $N/IM_\U$ is finitely generated.
        \end{enumerate}
     \end{enumerate}
 \end{lemma}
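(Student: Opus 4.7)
Part~(\ref{U ab mal}) is immediate: axiom~(\ref{U1}) makes $m$ a Mal'cev term, and~(\ref{U2}),~(\ref{U4}),~(\ref{U5}) say that $m$ commutes with each of $m$, $r_i$, $u_j$, so Proposition~\ref{ab mal alg} applies. For parts~(\ref{free ring}) and~(\ref{free submodule}), I would build a concrete $\A\in\U$ in which the ring and module are visibly free. Take $R^{\ast}:=\Z\langle r_1,\ldots,r_n\rangle$ (the free associative $\Z$-algebra), let $M^{\ast}$ be the free left $R^{\ast}$-module on $u_1,\ldots,u_\ell$, and set $\A:=R^{\ast}\oplus M^{\ast}$ as an abelian group. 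Define $m(a,b,c):=a-b+c$, $r_i(a,b):=r_ia+(1-r_i)b$, and $u_i(a):=r_ia+u_i$, where $1$ is the identity of $R^{\ast}$. A routine check of~(\ref{U1})--(\ref{U6}) shows $\A\in\U$, and any nontrivial relation among the $r_i$ in $\R_\U$ (respectively the $u_j$ in $\bm_\U$) would descend to a relation in $R^{\ast}$ (resp.\ $M^{\ast}$); hence both structures are free on the claimed generators.

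The heart of the lemma is part~(\ref{U con is module con}). For the forward direction, suppose $\theta$ is fully invariant. Applying the endomorphism $\phi(x)=z$, $\phi(z)=z$ to $w\in z/\theta$ returns $w(z,z)\in z/\theta\cap M_\U=N$, giving~(\ref{I+N}) and, by restriction,~(\ref{I}) and~(\ref{N}) as group-theoretic decompositions. The two halves of~(\ref{I}) are obtained separately: $IR_\U\subseteq I$ comes from the endomorphism $\phi(x)=s$, $\phi(z)=z$ with $s\in R_\U$, $r\in I$, which gives $\phi(r)=rs$; and $R_\U I\subseteq I$ follows from ordinary congruence compatibility with the term $s(y,z)$. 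Condition~(\ref{IM}) comes from the endomorphism $\phi(x)=x$, $\phi(z)=a$ with $a\in M_\U$ applied to $r\in I$, and~(\ref{f(x)-f(z)}) from $\phi(x)=x$, $\phi(z)=x$ applied to $f\in N$: this gives $f(x)\,\theta\, x$, so $f(x)-x\in z/\theta$, whose $R_\U$-part is $f(x)-f(z)-x$, forced into $I$ by~(\ref{I+N}). For the backward direction, decompose $w=r+q$ with $r\in I$, $q\in N$ via~(\ref{I+N}); for any endomorphism $\phi(x)=\alpha$, $\phi(z)=\beta$, combine Lemma~\ref{small fring}(\ref{ops are ops}) with the $R_\U$-linearity identity $q(\alpha)-q(\beta)=\hat{q}(\alpha-\beta)$ (where $\hat{q}:=q(x)-q(z)\in R_\U$) to obtain
\begin{align*}
\phi(w)-\phi(z)\;=\;r(\alpha-\beta)\;+\;(\hat{q}-x)\beta\;+\;q.
\end{align*}
Each summand lies in $z/\theta$: the first because $rF_\U(x,z)\subseteq I+N$ by~(\ref{I}) and~(\ref{IM}); the second because $\hat{q}-x\in I$ by~(\ref{f(x)-f(z)}) and the same argument applies to $(\hat{q}-x)F_\U(x,z)$; and $q\in N$ by assumption.

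For part~(\ref{finite congruence}), if $\theta$ is finitely generated by pairs $(a_i,b_i)$, set $c_i:=a_i-b_i=r_i+q_i$ (with $r_i\in R_\U$, $q_i\in M_\U$), let $I'$ be the two-sided ideal of $\R_\U$ generated by the $r_i$, let $N'$ be the $\R_\U$-submodule of $\bm_\U$ generated by $\{q_i\}\cup I'M_\U$, and let $\theta'$ be the congruence whose kernel is $I'+N'$. Checking~(\ref{I})--(\ref{f(x)-f(z)}) for $(I',N')$ shows $\theta'$ is fully invariant; since $\theta'$ contains all pairs $(c_i,z)$, $\theta\subseteq\theta'$, and the reverse inclusion is immediate, so $I=I'$ is finitely generated as an ideal and $N/IM_\U$ is generated by the images of the $q_i$. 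The converse reverses this: from finite generators of $I$ and $N/IM_\U$, the corresponding pairs generate $\theta$. The main obstacle is the displayed computation in the backward direction of~(\ref{U con is module con}): $\phi$ fails to be a homomorphism of the abelian group $(F_\U(x,z),+,z)$ whenever $\phi(z)\neq z$, and the resulting affine shift produces precisely the $(\hat{q}-x)\beta$ term that condition~(\ref{f(x)-f(z)}) is tailor-made to absorb.
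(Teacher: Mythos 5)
Your plan for part~(1) is exactly the paper's. For part~(4) the forward direction matches the paper's choice of endomorphisms (your $\phi(x)=x,\phi(z)=a$ gives $r(x,a)-a=r-r\cdot a$, whose $M_\U$-component forces $I M_\U\subseteq N$ once $z/\theta=I\oplus N$ is in hand), and your backward direction is a clean reorganization of the paper's: where the paper splits $t(v,w)=m(t^\id(v,w),w,t^u(w))$ and treats the two pieces separately, you compute $\phi(w)-\phi(z)=r(\alpha-\beta)+(\hat q-1)\beta+q$ directly and absorb the affine defect into the~(e) term. These are the same argument viewed through a different decomposition. Part~(5) also matches the paper modulo the citation of~\cite[Corollary~7.7]{MR909290} for ``$z/\rho\subseteq z/\theta$ implies $\rho\subseteq\theta$,'' which you leave implicit but which is the correct justification.

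The genuine gap is in parts~(2) and~(3). Exhibiting the algebra $\A=R^\ast\oplus M^\ast$ and evaluating at $(x,z)\mapsto(1,0)$ shows (after an induction that you wave at, but that is needed to see that the $u_i$-contributions cancel and the evaluation lands in $R^\ast$) that the canonical ring homomorphism $\chi\colon R^\ast\to\R_\U$, $\mathbf r_i\mapsto r_i$, has trivial kernel. That is the ``no extra relations'' half. But freeness of $\R_\U$ on $r_1,\ldots,r_n$ also requires that $\chi$ be \emph{surjective}, i.e., that every idempotent binary term of $\U$ is, modulo $\Eq(\U)$, a ring word in $r_1,\ldots,r_n$ alone; this is not automatic because the type of $\U$ also contains the unary operations $u_i$, and $u_i(t(x,z))$ is a perfectly good binary term. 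The paper devotes an explicit induction (the claim $F_{\U'}(x,z)=F_\U^\id(x,z)$, using axiom~(6) to rewrite $u_i(v)-u_i(v(z,z))$ as $r_i(v-v(z,z))$) precisely to eliminate the $u_i$'s from idempotent binary terms. Your argument, as written, only shows that $\R_\U$ \emph{contains} a free subring on $r_1,\ldots,r_n$; without the generation step it does not show $\R_\U$ \emph{is} that subring. The same remark applies to $\bm_\U$ in part~(3): you need to show that $u_1,\ldots,u_\ell$ generate $\bm_\U$ as an $\R_\U$-module, which again requires an induction over unary terms.
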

 \begin{proof}

\ref{U ab mal}. Every basic operation commutes with the Mal'cev term $m$. So $\U$ is an abelian Mal'cev variety by Proposition \ref{ab mal alg}.

 \ref{free ring}. Let $\U'$ be the variety of type $\{r_1,\ldots,r_n,m\}$ satisfying identities \ref{U1}-\ref{U4} of $\U$. Note that this makes $\U'$ an idempotent reduct of $\U$ and $+,-$ are still term operations of $\U'$ and $z$ is still the additive identity. Also, $\U$ and $\U'$ are abelian varieties. The unique Mal'cev term is $m(x,y,z)=x-y+z$ by Lemma \ref{ab mal algebra unique}. It commutes with every basic operation by Proposition \ref{ab mal alg}.

First, we claim \begin{align}\label{U' eqn}
F_{\U'}(x,z)=F_\U^\id(x,z).
\end{align}
Clearly, $F_{\U'}(x,z)\subseteq F_\U^\id(x,z)$. All that is left is to show $F_{\U'}(x,z)\supseteq F_\U^\id(x,z)$. Note that every $s\in F_\U^\id(x,z)$ has the form $$t(x,z)-t(z,z)$$ for some $t\in F_\U(x,z)$. We then induct on $t$ to show that $s\in F_{\U'}(x,z)$.

For the first base case, suppose $t=x$. Then $s=x-z=x\in F_{\U'}(x,z)$. Now, as a second base case, suppose $t=z$. Then $s=z-z=z\in F_{\U'}(x,z)$.

For the induction step, we consider several cases. First, suppose that $t=u_i(v(x,z))$ for some $i\leq\ell$ and some $v\in F_\U(x,z)$. 
 We have
\begin{align*}
	s(x,z) & = u_i\bigl(v(x,z)\bigl)-u_i\bigl(v(z,z)\bigl)\\ 
		&=u_i\bigl(v(x,z)\bigl)-u_i\bigl(v(z,z)\bigl)+u_i(z)-u_i(z)\\
		&=u_i\bigl(v(x,z)-v(z,z)+z\bigl)-u_i(z)\\
		&=r_i\bigl(v(x,z)-v(z,z)\bigl),\\
\end{align*}
 which is in $F_{\U'}(x,z)$ since $v(x,z)-v(z,z)\in F_{\U'}(x,z)$ by the inductive hypothesis.

For another induction case, suppose that $t=r_i(v,w)$ for some $i\leq n $ and some $v,w\in F_\U(x,z)$. 
Then
\begin{align*}
	s(x,z) &= r_i\bigl(v(x,z),w(x,z)\bigl)-r_i\bigl(v(z,z),w(z,z)\bigl)\\
        &= r_i\bigl(v(x,z),w(x,z)\bigl)-r_i\bigl(v(z,z),w(z,z)\bigl)+r_i(z,z)\text{ since } r_i \text{ is idempotent}\\
		&=r_i\bigl(v(x,z)-v(z,z)+z,w(x,z)-w(z,z)+z\bigl),
\end{align*}
which is in $F_{\U'}(x,z)$ since $v(x,z)-v(z,z),w(x,z)-w(z,z)\in F_{\U'}(x,z)$ by the inductive hypothesis.

For the last induction step, suppose that $t=m(v,w,y)$ for some $v,w,y\in F_\U(x,z)$. 
Using Lemma \ref{small fring}.\ref{ops are ops}, we have
\begin{align*}
	s(x,z)&=\bigl(v(x,z)-w(x,z)+y(x,z)\bigl)-\bigl(v(z,z)-w(z,z)+y(z,z)\bigl)\\
    &=\bigl(v(x,z)-v(z,z)\bigl)-\bigl(w(x,z)-w(z,z)\bigl)+\bigl(y(x,z)-y(z,z)\bigl)\\
    &=m\bigl(v(x,z)-v(z,z),w(x,z)-w(z,z),y(x,z)-y(z,z)\bigl),
\end{align*}
which is in $F_{\U'}(x,z)$ since $v(x,z)-v(z,z),w(x,z)-w(z,z),y(x,z)-y(z,z)\in F_{\U'}(x,z)$ by inductive hypothesis.

So we have shown $F_{\U'}(x,z)=F_\U^\id(x,z)$. It follows that $\R_\U=\R_{\U'}.$

Now we claim
\begin{align}\label{T eqn}
    \R_{\U'} \cong \langle T,+,\cdot \rangle, \text{ the free ring over generators } \rbf.
\end{align} 
We will write elements of $T$ in boldface to better distinguish them from elements of $F_{\U'}(x,z).$

Clearly, $$\psi\colon T\to F_{\U'}(x,z),\ \mathbf{r}_i\mapsto r_i\text{ for } i\leq n, $$ defines a ring homomorphism. To show $\psi$ is bijective, we will make $T$ an algebra in $\U'$. Then we will construct a homomorphism $$\tau\colon F_{\U'}(x,z)\to T$$ such that $\tau\circ\psi$ is the identity on $T$ and $\psi\circ\tau$ is the identity on $F_{\U'}(x,z)$.
We first have to define operations $r_1^\bt,\ldots,r_n^\bt,m^\bt$ on $T$ such that $\bt:=\langle T,r_1^\bt,\ldots,r_n^\bt,m^\bt\rangle\in\U'$. 
For $\mathbf{s}_1,\mathbf{s}_2,\mathbf{s}_3\in T$ and $i\leq n $ define
\begin{align*}
	r_i^\bt(\mathbf{s}_1,\mathbf{s}_2):&=\mathbf{r}_i\mathbf{s}_1+(\mathbf{1}-\mathbf{r}_i)\mathbf{s}_2,\\
	m^\bt(\mathbf{s}_1,\mathbf{s}_2,\mathbf{s}_3):&=\mathbf{s}_1-\mathbf{s}_2+\mathbf{s}_3.
\end{align*}

From the definition, it is clear that $m^\bt$ is a Mal'cev operation that commutes with itself and the idempotent affine operations $r_i^\bt$ for all $i\leq n $. Hence $\bt$ is in $\U'$.

Now we show that $\bt$ and $\Free_{\U'}(x,z)$ are isomorphic. Let $\tau\colon\Free_{\U'}(x,z)\to\bt$ be the homomorphism of algebras in $\U'$ defined by $x\mapsto\mathbf{1}$ and $z\mapsto\mathbf{0}$. We show that $\tau\circ\psi$ is the identity map on $T$ and that $\psi\circ\tau$ is the identity map on $F_{\U'}(x,z).$

We show that \begin{equation}\label{taupsi}
  \tau(\psi(\mathbf{s}))=\mathbf{s}  \text{ for all } \mathbf{s}\in T
\end{equation}  via induction on $\mathbf{s}$. 
    For the base case, we have $\tau(\psi(\mathbf{0}))=\tau(z)=\mathbf{0}$ and $\tau(\psi(\mathbf{1}))=\tau(x)=\mathbf{1}$.
    For $i\leq n $, we have \begin{align*}
     \tau(\psi(\mathbf{r}_i))&=\tau(r_i(x,z))\\
     &=r_i^\bt(\tau(x),\tau(z))\\
     &=r_i^\bt(\mathbf{1},\mathbf{0})\\
     &=\mathbf{r}_i\mathbf{1}+(\mathbf{1}-\mathbf{r}_i)\mathbf{0}\\
     &=\mathbf{r}_i.   
    \end{align*}

 For the induction step, let $\mathbf{s}_1,\mathbf{s}_2\in T$ such that $\tau(\psi(\mathbf{s}_i))=\mathbf{s}_i$ for $i\leq 2$. 
 Then
 \begin{align*}
    \tau\bigl(\psi(\mathbf{s}_1+\mathbf{s}_2)\bigl)&=\tau\bigl(\psi(\mathbf{s}_1)+\psi(\mathbf{s}_2)\bigl)\\
        &=\tau\Bigl(m\bigl(\psi(\mathbf{s}_1),z,\psi(\mathbf{s}_2)\bigl)\Bigl)\\
        &=m^\bt\Bigl(\tau\bigl(\psi(\mathbf{s}_1)\bigl),\tau(z),\tau\bigl(\psi(\mathbf{s}_2)\bigl)\Bigl)\\
        &=m^\bt(\mathbf{s}_1,\mathbf{0},\mathbf{s}_2)\\
        &=\mathbf{s}_1-\mathbf{0}+\mathbf{s}_2\\
        &=\mathbf{s}_1+\mathbf{s}_2.
 \end{align*}
 
 Also,
 \begin{align*}
     \tau\bigl(\psi(\mathbf{s}_1\mathbf{s}_2)\bigl)&=\tau\bigl(\psi(\mathbf{s}_1)\psi(\mathbf{s}_2)\bigl)\\
     &=\tau\bigl(\psi(\mathbf{s}_1)(\psi(\mathbf{s}_2),z)\bigl)\\
     &=\tau\bigl(\psi(\mathbf{s}_1)\bigl)\Bigl(\tau\bigl(\psi(\mathbf{s}_2)\bigl),\tau(z)\Bigl)\\
     &=\mathbf{s}_1(\mathbf{s}_2,\mathbf{0})\\
     &=\mathbf{s}_1\mathbf{s}_2+(\mathbf{1}-\mathbf{s}_1)\mathbf{0}\\
     &=\mathbf{s}_1\mathbf{s}_2.
 \end{align*}
So we have shown that $\tau\circ\psi$ is the identity map on $T$.

Conversely, we show that \begin{equation}\label{psitau}
    \psi(\tau(s))=s \text{ for all } s\in F_{\U'}(x,z)
\end{equation}   via induction on $s$.
    For the base cases, we have $\psi(\tau(x))=\psi(\mathbf{1})=x$ and $\psi(\tau(z))=\psi(\mathbf{0})=z$.

Let $s_1,s_2,s_3\in F_{\U'}(x,z)$ such that $\psi(\tau(s_j))=s_j$ for $j\leq 3.$ 
Then
\begin{align*}
    \psi\Bigl(\tau\bigl(m(s_1,s_2,s_3)\bigl)\Bigl)&=\psi\Bigl(m^\bt\bigl(\tau(s_1),\tau(s_2),\tau(s_3)\bigl)\Bigl)\\
    &=\psi\bigl(\tau(s_1)-\tau(s_2)+\tau(s_3)\bigl)\\
    &=\psi\bigl(\tau(s_1)\bigl)-\psi\bigl(\tau(s_2)\bigl)+\psi\bigl(\tau(s_3)\bigl)\\
    &=s_1-s_2+s_3\\
    &=m(s_1,s_2,s_3)\text{ by Lemma } \ref{small fring}.\ref{ops are ops}.
\end{align*}

Also, for $i\leq n$,
\begin{align*}
    \psi\Bigl(\tau\bigl(r_i(s_1,s_2)\bigl)\Bigl)&=\psi\Bigl(r_i^\bt\bigl(\tau(s_1),\tau(s_2)\bigl)\Bigl)\\
    &=\psi(\mathbf{r}_i\tau(s_1)+(\mathbf{1}-\mathbf{r}_i)\tau(s_2))\\
    &=\psi(\mathbf{r}_i)\psi\bigl(\tau(s_1)\bigl)+\psi(\mathbf{1}-\mathbf{r}_i)\psi\bigl(\tau(s_2)\bigl)\\
    &=r_is_1+(1-r_i)s_2\\
    &=r_i(s_1,s_2) \text{ by Lemma } \ref{small fring}.\ref{ops are ops}.
\end{align*}

So we have shown that $\psi\circ\tau$ is the identity on $F_{\U'}(x,z)$. Together with the fact that $\tau\circ\psi$ is the identity on $T$, we have shown (\ref{T eqn}). This, along with (\ref{U' eqn}), proves Lemma \ref{Fring}.\ref{free ring}.

\ref{free submodule}. We want to show \begin{equation}\label{V}
    \bm_\U\cong \langle V,+,R_\U\rangle, \text{ the free } \R_\U\text{-module over generators } \byous.
\end{equation} 
We write elements of $V$ in boldface to better distinguish them from elements in $F_\U(z)$.
Clearly, $\psi\colon V\to F_\U(z)$, $\mathbf{u}_i\mapsto u_i$ for $i\leq\ell$, defines an $\R_\U$-module homomorphism.
To show that $\psi$ is bijective, we will make $V$ an algebra in $\U$. Then we will construct a homomorphism $\tau\colon F_\U(z)\to V$ such that $\tau\circ\psi$ is the identity map on $V$ and $\psi\circ\tau$ is the identity map on $F_\U(z)$.
We then have to define operations $u_1^\bv,\ldots,u_\ell^\bv,r_1^\bv,\ldots,r_n^\bv,m^\bv$ on $V$ such that $\bv:=\langle V,u_1^\bv,\ldots,u_\ell^\bv,r_1^\bv,\ldots,r_n^\bv,m^\bv\rangle$ is in $\U$. 
For $\mathbf{v}_1,\mathbf{v}_2,\mathbf{v}_3\in V$, $i\leq\ell$ and $j\leq n $ define
\begin{align*}
    u_i^\bv(\mathbf{v_1})&:=r_i\mathbf{v_1}+\mathbf{u}_i,\\
	r_j^\bv(\mathbf{v}_1,\mathbf{v}_2):&=r_j\mathbf{v}_1+(1-r_j)\mathbf{v}_2,\\
	m^\bv(\mathbf{v}_1,\mathbf{v}_2,\mathbf{v}_3):&=\mathbf{v}_1-\mathbf{v}_2+\mathbf{v}_3.
\end{align*}

The definition clearly shows that $m^\bv$ is a Mal'cev operation. It is also clear that $m^\bv$ commutes with itself and the idempotent affine operations $r_i^\bv$ for all $i\leq n $ and the unary operations $u_i^\bv$. Hence, $\bv\in \U.$

Now we show that $\bv$ and $\Free_\U(z)$ are isomorphic. Let $\tau\colon\Free_{\U}(z)\to\bv$ be the homomorphism of algebras in $\U$ defined by $z\mapsto\mathbf{0}$. We show that $\tau\circ\psi$ is the identity on $V$ and that $\psi\circ\tau$ is the identity on $F_{\U}(z).$

We show that \begin{equation}\label{taupsiv}
    \tau(\psi(\mathbf{v}))=\mathbf{v} \text{ for all } \mathbf{v}\in V
\end{equation}  via induction on $\mathbf{v}$. 
    For the first base case, we have $\tau(\psi(\mathbf{0}))=\tau(z)=\mathbf{0}$. 
    For $i\leq\ell$ we have
    \begin{align*}
      \tau(\psi(\mathbf{u}_i)      &=\tau(u_i(z))=u_i^\bv(\tau(z))\\
      &=u_i^\bv(\mathbf{0})\\
      &=r_i\mathbf{0}+\mathbf{u}_i\\
      &=\mathbf{u}_i.  
    \end{align*}

For the induction step, let $\mathbf{v},\mathbf{w}\in V$ such that $\tau(\psi(\mathbf{v}))=\mathbf{v}$ and $\tau(\psi(\mathbf{w}))=\mathbf{w}$. 
Then
\begin{align*}
    \tau\bigl(\psi(\mathbf{v}+\mathbf{w})\bigl)&=\tau\bigl(\psi(\mathbf{v})+\psi(\mathbf{w})\bigl)\\
    &=m^\bv\Bigl(\tau\bigl(\psi(\mathbf{v})\bigl),\tau(z),\tau\bigl(\psi(\mathbf{w})\bigl)\Bigl)\\
    &=m^\bv(\mathbf{v},\mathbf{0},\mathbf{w})\\
    &=\mathbf{v}+\mathbf{w}.
\end{align*}
Let $r\in F_\U^\id(x,z)$. 
Then
\begin{align*}
    \tau\bigl(\psi(r\mathbf{v})\bigl)&= \tau\bigl(r\psi(\mathbf{v})\bigl)\\
    &=r^\bv\Bigl(\tau\bigl(\psi(\mathbf{v})\bigl),\tau(z)\Bigl)\\
    &=r^\bv(\mathbf{v},\mathbf{0})\\
    &=r\mathbf{v}+(1-r)\mathbf{0}\\
    &=r\mathbf{v}.
\end{align*}
So we have shown that $\tau\circ\psi$ is the identity map on $V$.

Conversely, we show that \begin{equation}\label{psitauv}
    \psi(\tau(t)=t \text{ for all } t\in F_\U(z)
\end{equation} via induction on $t(z)$.  For the base case we have $\psi(\tau(z))=\psi(\mathbf{0})=z.$  

For the induction steps, let $t_1,t_2,t_3\in F_\U(z)$ such that $\psi(\tau(t_j))=t_j$ for $j\leq 3$. 
Then
\begin{align*}
    \psi\Bigl(\tau\bigl(m(t_1,t_2,t_3)\bigl)\Bigl)&=\psi\Bigl(m^\bv\bigl(\tau(t_1),\tau(t_2),\tau(t_3)\bigl)\Bigl)\\
    &=\psi\bigl(\tau(t_1)-\tau(t_2)+\tau(t_3)\bigl)\\
    &=\psi\bigl(\tau(t_1)\bigl)-\psi\bigl(\tau(t_2)\bigl)+\psi\bigl(\tau(t_3)\bigl)\\
    &=t_1-t_2+t_3\\
    &=m(t_1,t_2,t_3).
\end{align*}
Suppose $i\leq n $. Then
\begin{align*}
\psi\Bigl(\tau\bigl(r_i(t_1,t_2)\bigl)\Bigl)&=\psi\Bigl(r_i^\bv\bigl(\tau(t_1),\tau(t_2)\bigl)\Bigl)\\
&=\psi\bigl(r_i\tau(t_1)+(1-r_i)\tau(t_2)\bigl)\\
&=r_i\psi\bigl(\tau(t_1)\bigl)+(1-r_i)\psi\bigl(\tau(t_2))\\
&=r_it_1+(1-r_i)t_2\\
&=r_i(t_1,t_2)\text{ by Lemma }\ref{small fring}.\ref{ops are ops}.
\end{align*}

Suppose that $i\leq\ell$. 
We have
\begin{align*}
    \psi\Bigl(\tau\bigl(u_i(t)\bigl)\Bigl)&=\psi\Bigl(u_i^\bv\bigl(\tau(t)\bigl)\Bigl)\\
    &=\psi\bigl(r_i\tau(t)+\mathbf{u}_i\bigl)\\
    &=r_i\psi\bigl(\tau(t)\bigl)+\psi(\mathbf{u}_i)\\
    &=r_it+u_i(z)\\
    &=u_i(t)-u_i(z)+u_i(z)\\
    &=u_i(t).
\end{align*}

So we have shown $\psi\circ\tau$ is the identity map on $M_\U$. Together with $\tau\circ\psi$ being the identity map on $V$, this shows (\ref{V}). This proves Lemma \ref{Fring}.\ref{free submodule}.

\ref{U con is module con}.
We first prove the forward direction.
Assume that $\theta$ is a fully invariant congruence of $\Free_\U(x,z)$.
We prove \ref{I}. That is, we prove that $z/\theta\cap R_\U=I$ is an ideal of $\R_\U$.
$I$ is clearly an additive subgroup of $\R_\U$.
To prove $I$ is multiplicatively closed, consider $s\in R_\U$ and $t\in I$.
To see $s\cdot t, t\cdot s\in R_\U$, note that $\R_\U$ is a ring and thus multiplicatively closed.

To prove that $s\cdot t, t\cdot s\in z/\theta$, note that because $\theta$ is a congruence, we have
\begin{align*}
	s(t(x,z),z) \ \theta\ s(z,z)
			  = z.\\
\end{align*}
Since $\theta$ is a fully invariant congruence, we have that $t(x,z)\ \theta\ z$ implies 
\begin{align*}
	t(s(x,z),z) \ \theta\ z.
\end{align*} 
Hence $s\cdot t,t\cdot s\in I$ and $I$ is an ideal. 
This proves \ref{I}.

Now we show \ref{N}. That is, we show that $z/\theta\cap M_\U=N$ is an $\R_\U$-submodule of $\bm_\U$. 
Clearly, $N$ is an additive subgroup.
To show $N$ is closed under the action by $\R_\U$, consider $s\in R_\U$, $v\in N$.
Since $\theta$ is a congruence,
\begin{align*}
	s(v,z) \ \theta\ s(z,z)
		   = z.
\end{align*}
 Hence $s\cdot v\in N$ and $N$ is an $\R_\U$-submodule of $\bm$.
 This proves \ref{N}.

We now show \ref{I+N}. For any $s(x,z)\ \theta\ z$, note that $s(x,z)-s(z,z)\in I$ and $s(z,z)\in N$. Hence $z/\theta=I+N.$
This proves \ref{I+N}.

We now show \ref{IM}, that $IM_\U\subseteq N.$
Let $s\in I$ and $v\in M_\U$.
Then 
\begin{align*}
        s(v,z)&\ \theta\ z\text{ since } \theta \text{ is fully invariant.}                        
\end{align*}
Since $s(v,z)\in M_\U$ and $s(v,z)\in z/\theta$, we have that $s(v,z)\in N$.
So $IM_\U\subseteq N$.
This proves \ref{IM}.

We now show \ref{f(x)-f(z)}. That is, we show that $f(x)-f(z)-x\in I$ for all $f(z)\in N$.
Suppose that $f(z)\in N$.
Then $f(x)\ \theta\ x$ because $\theta$ is fully invariant.
Also $f(x)-f(z)\ \theta\ x-z$, and $f(x)-f(z)-x\ \theta\ z$.
Also, $f(x)-f(z)-x\in R_\U$ so $f(x)-f(z)-x\in I$.
This proves \ref{f(x)-f(z)}.
We have proven Properties \ref{I}-\ref{f(x)-f(z)}.

Now we prove the converse. Suppose that $\theta,I,N$ satisfy properties \ref{I}-\ref{f(x)-f(z)}.
We introduce some notation. 
For $f\in F_\U(x,z)$, define $f^\id(x,z):=f(x,z)-f(z,z)$ and $f^u(z):=f(z,z)$. Note that $f(x,z)=f^\id(x,z)+f^u(z).$

Now we show that $\theta$ is fully invariant.
Let $t(x,z)\in z/\theta$ and $v,w\in F_\U(x,z)$.
We need to show that $t(v,w)\ \theta\ w$. 
Since $t(v,w)=m(t^\id(v,w),w,t^u(w))$ and $m(w,w,w)=w$,
it will suffice to show that $t^\id(v,w)\ \theta\ w\ \theta\ t^u(w)$.

Note that for $r\in I$ and $s\in F_\U(x,z)$, we have
\begin{align*}
    r\cdot s &= r\cdot(s^\id+s^u)\\
            &= \underbrace{r\cdot s^\id}_{\in I}+\underbrace{r\cdot s^u}_{\in IM_\U\subseteq N}\\
            &\ \theta\ z.
\end{align*}

Since $t\in z/\theta$, we have $t^\id\in z/\theta\cap R_\U=I$ and $t^u\in z/\theta\cap M_\U=N$.

We have
\begin{align*}
    t^\id(v,w)&=t^\id\cdot v+(x-t^\id)\cdot w\\
    &= t^\id \cdot v+w-t^\id\cdot w\\
    &\ \theta\ w.
\end{align*}

Since $t^u(z)\in N$, by \ref{f(x)-f(z)}, $t^u(x)-t^u(z)-x\in I$. So we have that
\begin{align*}
    t^u(w)-z-w&\ \theta\ (t^u(x)-t^u(z)-x)\cdot w\\
    &\ \theta\ z.
\end{align*}
This implies that $t^u(w)\ \theta\ w$ and we are done. 

\ref{finite congruence}. We prove the forward direction first. Suppose that $\theta$ is a finitely generated fully invariant congruence of $\Free_\U(x,z).$  
Since $(s,t)\in\theta$ if and only if $(m(s,t,z),z)\in\theta$, we can assume that the generating set for $\theta$ has the form
$$\{(t_1,z),\ldots,(t_k,z)\}$$ for some $k\in\mathbb{N}$ and $t_i\in F_\U(x,z)$ for $i\leq k$.
Using Lemma \ref{small fring}.\ref{direct sum}, we can further assume the generating set for $\theta$ has the form $$\{(s_1,z),\ldots,(s_k,z),(v_1,z),\ldots,(v_k,z)\}$$ for $s_i\in R_\U$ and $v_i\in M_\U$ for $i\leq k$.

Let $I$ be the ideal of $\R_\U$ generated by $s_1,\ldots,s_k$. Let $N$ be the $\R_\U$-submodule of $\bm$ generated by $v_1,\ldots,v_k$ and $IM_\U$.
Let $\rho$ be the congruence of the $\R_\U$-module $\langle F_\U(x,z),+,R_\U\rangle$ induced by the submodule $I+N$. We want to show that $\theta=\rho$, so that $z/\theta=I+N$, proving that $I=z/\theta\cap R_\U$ and $N=z/\theta\cap M_\U$.

To see that $\rho\subseteq\theta$, notice that $z/\rho=I+N\subseteq z/\theta$. Since $\Free_\U(x,z)$ is an abelian Mal'cev algebra, that implies $\rho\subseteq \theta$ by \cite[Corollary 7.7]{MR909290}.

The last thing to prove is that $\theta\subseteq \rho$. Since $\rho$ contains a generating set for $\theta$, it suffices to show that $\rho$ is a fully invariant congruence of $\Free_\U(x,z)$. For this, we use the characterization of \ref{U con is module con}.

By construction, \ref{I}-\ref{IM} hold for $\rho$.

To see that property (e) holds, suppose that $f(z)\in N$.
Then
\begin{align*}
    f(z)    &=\sum_{i=1}^\ell p_i\cdot u_i(z) +\sum_{j=1}^k q_j\cdot v_j(z)
\end{align*}
for some $p_i\in I$ and $q_j\in R_\U$.
We have
\begin{align*}
    f(x)-f(z)-x  &=\sum_{i=1}^\ell p_i(u_i(x),x) +\sum_{j=1}^k q_j(v_j(x),x)-(k+\ell-1)x-f(z)-x\\
                &=\sum_{i=1}^\ell \bigl(p_iu_i(x)+(1-p_i)x\bigl)+\sum_{j=1}^k\bigl(q_jv_j(x)+(1-q_j)x\bigl)-(k+\ell)x-f(z)\\
                &=\sum_{i=1}^\ell p_i(u_i(x)-x)+\sum_{j=1}^kq_j(v_j(x)-x)-\sum_{i=1}^\ell p_iu_i(z)-\sum_{j=1}^kq_jv_j(z)\\
                &=\sum_{i=1}^\ell \underbrace{p_i}_{\in I}\underbrace{(u_i(x)-u_i(z)-x)}_{\in R_\U}+\sum_{j=1}^k\underbrace{q_j}_{\in R_\U}\underbrace{(v_j(x)-v_j(z)-x)}_{\in I}.
\end{align*}
Hence $f(x)-f(z)-x\in I$. This proves condition (e).

Properties \ref{I}-\ref{f(x)-f(z)} hold. So $\rho$ is a fully invariant congruence and $\theta\subseteq\rho$. This completes the proof that $\rho=\theta$. 
That means that $z/\theta\cap R_\U=I$, a finitely generated $\R_\U$ ideal, and $z/\theta\cap M_\U=N$, a finitely generated $\R_\U$-submodule of $\bm$. We have proven the forward direction of \ref{finite congruence}.

Now we show the converse.
Assume that $\theta$ is a fully invariant congruence.
For $o\in\mathbb{N}$, let $s_1,\ldots,s_o\in R_\U$ generate the ideal $I=z/\theta\cap R_\U$ of $\R_\U$. For $k\in\mathbb{N}$, let $v_1,\ldots,v_k\in M_\U$ such that $v_1+IM_\U,\ldots,v_k+IM_\U$ generate the $R_\U$-module $N/IM_\U$ for $N=z/\theta\cap M_\U$.

Let $\rho$ be the fully invariant congruence of $\Free_\U(x,z)$ generated by $$\{(s_1,z),\ldots,(s_o,z),(v_1,z),\ldots,(v_k,z)\}.$$

The claim is that $z/\rho=I+N=z/\theta$. If this is true, then $\rho=\theta$ by \cite[Corollary 7.7]{MR909290} since $\Free_\U(x,z)$ is abelian.

Since the generators of $\rho$ are in $\theta$, we have $\rho\subseteq\theta$ and hence $z/\rho\subseteq I+N.$

Then what is left to show is that $I+N\subseteq z/\rho$. First we show $I\subseteq z/\rho$. Let $t\in I$. We want $t\ \rho\ z$. We have for some $r\in\N$ some $i_1,\ldots,i_r\in\{1,\ldots,o\}$ and $p_1,\ldots,p_r,q_1,\ldots,q_r\in R$ such that 
\begin{align*}
    t=\sum_{j=1}^r p_js_{i_j}q_j.
\end{align*}
We have 
\begin{align*}
    p_js_{i_j}q_j\ &\ \rho\ p_j(z,z) \text{ because } \rho \text{ is fully invariant} \\
                    &= z,
\end{align*}
 for all $j\leq r$. Since addition is defined using $m$ and $\rho$ is a congruence, we have that $t\ \rho\ z$. So we have that $I\subseteq z/\rho$.

Now we show $N\subseteq z/\rho$. Suppose that $w\in N$. Then it must be that
\begin{align*}
    w(z)    &=  \sum_{i=1}^\ell p_i\cdot u_i+\sum_{j=1}^k q_j\cdot v_j\\
            &= \sum_{i=1}^\ell p_i(u_i,z)+\sum_{j=1}^k q_j(v_j(z),z)
\end{align*}
for some $p_i\in I$ and $q_j\in R_\U.$
We have 
\begin{align*}
    q_j(v_j(z),z)   &\ \rho\ q_j(z,z) \text{ because } \rho \text{ is a congruence }\\
                    &=z
\end{align*}
for all $j\leq k$.
We also have that
\begin{align*}
    p_i(u_i(z),z)\ \rho\ z \text{ }
\end{align*}
since $p_i\in I$ and $p_i\ \rho\ z$ and $\rho$ is fully invariant.
Since addition is defined using $m$ and $\rho$ is a congruence, we have that $w\ \rho\ z$. So we have that $N\subseteq z/\rho$.

We have shown that $I\subseteq z/\rho$ and $N\subseteq z/\rho$ to prove that $I+N\subseteq z/\rho$. Together with $z/\rho\subseteq I+N$ we have that $I+N=z/\rho$. Since $\Free_\U(x,z)$ is an abelian Mal'cev variety, this proves that $\theta=\rho$. So $\theta$ is finitely generated as a fully invariant congruence and we are done with the proof.
\end{proof}

\section{Equational Theories and Fully Invariant Congruences}
    In this section, we establish the relationship between an abelian Mal'cev variety's equational theory and a fully invariant congruence of $\Free_\U(x,z)$. We first give some needed definitions and background. Then, we state a well-known result relating the equational theories of arbitrary varieties and the fully invariant congruences of their term algebras.
    
    For a given variety $\V$, we use $\Eq(\V)$ to denote the \emph{equational theory of $\V$}. Formally, $$\Eq(\V):=\{s\approx t\ |\ \V\models s\approx t\}. $$
    For $X$ a set of variables and $\sigma$ a type, let $\bt(X)$ denote the \emph{term algebra of type $\sigma$ over $X$}. 
    Let $\Sigma$ be a set of identities over a type $\sigma$. For $\sigma$ terms $s,t$ we write \emph{$\Sigma\models s\approx t$} if whenever an algebra $\A$ satisfies every equation in $\Sigma$, then $\A\models s\approx t$. 
    
    Let $\A$ be an algebra and $S\subseteq A^2.$ Then let $\Cg_\fuil(S)$ denote the smallest fully invariant congruence of $\A$ containing $S$. We call this the \emph{fully invariant congruence generated by $S$.} Let $\Con_\fuil(\A)$ denote the lattice of fully invariant congruences of $\A$.

\begin{theorem}\label{BS1412}\cite[Chapter II, Theorem 14.12]{MR648287}
    If $\Sigma$ is a set of identities over $X$ and $p\approx q$ an identity over $X$, then
    \begin{align*}
        \Sigma\models p\approx q \iff (p,q)\in \Cg_\fuil\bigl(\{(s,t)\ |\ s\approx t\in\Sigma\}\bigl).
    \end{align*}
\end{theorem}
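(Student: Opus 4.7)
The plan is to prove both implications via the natural correspondence between fully invariant congruences on $\bt(X)$ and equational theories of quotient algebras. Set $\theta := \Cg_\fuil(\{(s,t)\ |\ s\approx t\in\Sigma\})$.

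For the forward direction, I would first verify that $\bt(X)/\theta \models \Sigma$. Given $s\approx t\in\Sigma$ and an arbitrary assignment $h\colon X\to\bt(X)/\theta$, I would pick representatives $r_x \in h(x)$ and use the freeness of $\bt(X)$ on $X$ to lift these to an endomorphism $\hat h\colon \bt(X)\to\bt(X)$ with $\hat h(x)=r_x$. Since $(s,t)\in\theta$ and $\theta$ is fully invariant, $(\hat h(s),\hat h(t))\in\theta$, giving $h(s)=h(t)$ in $\bt(X)/\theta$. Hence $\bt(X)/\theta\models\Sigma$, so by the hypothesis $\Sigma\models p\approx q$ we get $\bt(X)/\theta\models p\approx q$. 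Applying the canonical assignment $x\mapsto x/\theta$ then yields $p/\theta=q/\theta$, i.e., $(p,q)\in\theta$.

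For the converse, fix any algebra $\A$ satisfying $\Sigma$ and define $\theta_\A := \{(u,v)\in\bt(X)^2\ |\ \A\models u\approx v\}$. I would show this is a fully invariant congruence containing the generators of $\theta$: it is a congruence because satisfaction of an identity is preserved by term building, and full invariance follows from substitution---if $u^\A=v^\A$ as functions $A^X\to A$, then $(\hat h(u))^\A=(\hat h(v))^\A$ for any endomorphism $\hat h$ of $\bt(X)$. Since $\A\models\Sigma$, the generators of $\theta$ lie in $\theta_\A$, so $\theta\subseteq\theta_\A$; in particular $(p,q)\in\theta_\A$, whence $\A\models p\approx q$. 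As $\A$ was arbitrary, $\Sigma\models p\approx q$.

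The main subtlety will be the forward direction's claim that $\bt(X)/\theta$ satisfies each identity in $\Sigma$ under \emph{every} assignment, not merely on the canonical image of $X$. This is precisely where full invariance of $\theta$ is indispensable, via the lift of an arbitrary assignment to an endomorphism of the term algebra; everything else is bookkeeping about preservation of equations under substitution.
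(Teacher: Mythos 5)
The paper cites this result from Burris and Sankappanavar without providing its own proof, so there is no internal argument to compare against; you are supplying the proof that the paper delegates to the reference. Your argument is correct and is, in essence, the standard textbook proof of this completeness-type theorem. The key move in the forward direction—lifting an arbitrary assignment $h\colon X\to\bt(X)/\theta$ to an endomorphism $\hat h$ of $\bt(X)$ via choice of representatives, then invoking full invariance of $\theta$ to conclude $\pi\circ\hat h$ identifies $s$ and $t$, hence $\bt(X)/\theta\models\Sigma$—is exactly right, and the final step (evaluating at the canonical assignment $x\mapsto x/\theta$ to extract $(p,q)\in\theta$) is the usual closing. The converse is likewise sound: $\theta_\A$ is a congruence because term operations respect identification of term functions, and it is fully invariant because substitution of terms into equal term functions yields equal term functions; since $\theta$ is the least fully invariant congruence containing the generators and $\A\models\Sigma$ puts those generators in $\theta_\A$, the containment $\theta\subseteq\theta_\A$ is immediate. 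No gaps.
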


    For two varieties $\V,\W$, we write \emph{$\V\leq\W$} to mean $\V$ is a subvariety of $\W$.

    We now relativize Theorem \ref{BS1412} to varieties $\W$ contained in $\U$. Instead of looking at the fully invariant congruences of a term algebra, we will be looking at the fully invariant congruences of $\Free_\U(x_1,x_2,\ldots)$ and then $\Free_\U(x,z)$. The latter one will be important since we have already related the ring ideals and submodules of $\R_\U$ and $\bm_\U$ to fully invariant congruences of $\Free_\U(x,z)$ in Lemma \ref{Fring}. This brings us one step closer to proving Theorem \ref{the theorem}.

\begin{corollary}\label{rel BS1412}
    Let $X$ be a countable set of variables and let $\U$ be a variety. Then $$\tau\colon \{\Eq(\W)\ |\ \W\subseteq\U\}\to\Con_\fuil(\Free_\U(X)),\; \Eq(\W)\mapsto \{(s^\U,t^\U)\ |\ \W\models s\approx t\},$$ is a lattice isomorphism. Also, this isomorphism restricts to a bijection between the equational theories that are finitely based relative to $\U$ and the finitely generated fully invariant congruences of $\Free_\U(X)$.
\end{corollary}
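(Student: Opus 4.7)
The plan is to deduce this corollary from Theorem \ref{BS1412} together with the standard congruence-correspondence theorem. Write $\bt(X)$ for the absolutely free term algebra in the type of $\U$ on $X$, and let $\eta\colon \bt(X)\twoheadrightarrow \Free_\U(X)$ be the canonical surjection. The kernel of $\eta$ is the fully invariant congruence
$$\Theta_\U:=\Cg_\fuil\bigl(\{(s,t)\mid s\approx t\in\Eq(\U)\}\bigr).$$

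The first step is to apply Theorem \ref{BS1412} and observe that the assignment sending an equational theory $T$ to $\Cg_\fuil\bigl(\{(s,t)\mid s\approx t\in T\}\bigr)$ restricts to a lattice isomorphism between equational theories extending $\Eq(\U)$ and fully invariant congruences of $\bt(X)$ that contain $\Theta_\U$. The domain of this restriction is in natural bijection with $\{\Eq(\W)\mid \W\subseteq\U\}$, since subvarieties of $\U$ correspond bijectively to equational theories extending $\Eq(\U)$.

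Next, I would verify that $\eta$ induces a lattice isomorphism between $\{\Psi\in\Con_\fuil(\bt(X))\mid \Theta_\U\subseteq\Psi\}$ and $\Con_\fuil(\Free_\U(X))$, via $\Psi\mapsto \{(\eta(s),\eta(t))\mid (s,t)\in\Psi\}$ with inverse $\theta\mapsto \eta^{-1}(\theta)$. The key point, and the main technical obstacle, is that full invariance transfers across $\eta$: every endomorphism of $\Free_\U(X)$ lifts along $\eta$ to an endomorphism of $\bt(X)$ by the universal property of $\bt(X)$, and every endomorphism of $\bt(X)$ descends to $\Free_\U(X)$ because $\Theta_\U$ is itself fully invariant. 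Composing this isomorphism with the one from the previous paragraph produces $\tau$ as described in the statement, and the composite is a lattice isomorphism since each factor is.

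For the finiteness claim, an equational theory $\Eq(\W)$ is finitely based relative to $\U$ precisely when there exist finitely many equations $s_1\approx t_1,\ldots,s_k\approx t_k$ with $\Eq(\W)$ generated as an equational theory by $\Eq(\U)\cup\{s_i\approx t_i\}_{i\leq k}$. Tracing this through both bijections above, this is equivalent to $\tau(\Eq(\W))$ being generated as a fully invariant congruence of $\Free_\U(X)$ by the finite set $\{(\eta(s_i),\eta(t_i))\mid i\leq k\}$, giving the desired restriction to a bijection between relatively finitely based theories and finitely generated fully invariant congruences.
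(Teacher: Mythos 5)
Your proposal is correct and follows essentially the same route as the paper: pass through the absolutely free term algebra $\bt(X)$, apply Theorem \ref{BS1412} to get the correspondence with fully invariant congruences of $\bt(X)$ above $\ker\eta$, then transfer via the correspondence theorem to $\Con_\fuil(\Free_\U(X))$, and finally track finiteness across the composite. The only cosmetic difference is that the paper dispatches the finiteness claim by remarking that lattice isomorphisms preserve compact elements, whereas you trace a finite generating set of equations through the two bijections by hand; these are the same argument stated at different levels of abstraction, and your explicit note that full invariance lifts and descends along $\eta$ is a detail the paper leaves implicit.
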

\begin{proof}
    Let $\psi\colon \mathbf{T}(X)\to\Free_\U(X),\ s\mapsto s^\U$. By the Isomorphism Theorem, we know that $\mathbf{T}(X)/\ker\psi\cong\Free_\U(X)$. Also, there exists a lattice isomorphism between the fully invariant congruences of $\mathbf{T}(X)$ that contain $\ker\psi$ and the fully invariant congruences of $\Free_\U(X)$. 

    By Theorem \ref{BS1412}, there exists a lattice isomorphism between the fully invariant congruences of $\mathbf{T}(X)$ containing $\ker\psi$ and $\{\Eq(\W)\ |\ \W\leq\U\}.$ This is because the equational theory of $\W$ contains the equational theory of $\U$.

    Hence, there exists a lattice isomorphism between the fully invariant congruences of $\Free_\U(X)$ and the equational theories of subvarieties of $\U$. It is easy to check that $\tau$ is this lattice isomorphism.

    Now $\tau$ will map compact elements to compact elements. That is, $\tau$ will map finitely based (relative to $\U$) equational theories to finitely generated fully invariant congruences of $\Free_\U(X)$.
\end{proof}

\begin{lemma}\label{fin basis fin con}
    Let $\U$ be as in Definition \ref{C} and $\W$ be a subvariety of $\U$. Let $\phi\colon\Free_\U(x_1,z)\to\Free_\W(x_1,z),$ $s^\U\mapsto s^\W.$ Then $\W$ is finitely based over $\U$ if and only if $\theta:=\ker\phi$ is finitely generated as a fully invariant congruence of $\Free_\U(x_1,z)$.
\end{lemma}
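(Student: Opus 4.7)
The plan is to combine Corollary \ref{rel BS1412}, which relates equational theories of subvarieties of $\U$ to fully invariant congruences of $\Free_\U(X)$ for countable $X$, with the two-variable decomposition of Lemma \ref{basis lemma}. Fix a countable $X\supseteq\{x_1,z\}$ and set $\Theta:=\ker(\Free_\U(X)\to\Free_\W(X))$. Since $\Free_\W(x_1,z)$ embeds into $\Free_\W(X)$ as the subalgebra generated by $\{x_1,z\}$, one has $\theta=\Theta\cap\Free_\U(x_1,z)^2$. Corollary \ref{rel BS1412} says $\W$ is finitely based over $\U$ if and only if $\Theta$ is finitely generated as a fully invariant congruence of $\Free_\U(X)$, so it suffices to show that $\Theta$ is finitely generated if and only if $\theta$ is.

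For the forward direction, given a finite generating set $\{(a_i,b_i)\}_{i\leq k}$ of $\Theta$, I would replace each pair by its finite family of two-variable projections $(a_{i,j},b_{i,j})$ from Lemma \ref{basis lemma}: the substitutions $x_m\mapsto z$ for $m\neq j$ produce each projection from $(a_i,b_i)$, and Lemma \ref{basis lemma}.\ref{malcev decomp} reconstructs $(a_i,b_i)$ from its projections via $m$, so the two families generate the same fully invariant congruence of $\Free_\U(X)$. Renaming each non-$z$ variable to $x_1$, this yields a finite set $C\subseteq\Free_\U(x_1,z)^2$ that still generates $\Theta$. Let $\theta'$ be the fully invariant closure of $C$ in $\Free_\U(x_1,z)$; clearly $\theta'\subseteq\theta$. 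The reverse inclusion is the crux: I would observe that $\A:=\Free_\U(x_1,z)/\theta'$ satisfies every equation $c\approx d$ in $C$, since full invariance of $\theta'$ gives $(c(s,t),d(s,t))\in\theta'$ for all $s,t\in\Free_\U(x_1,z)$, which covers every pair of elements of the two-generated algebra $\A$. Hence $\A$ lies in the subvariety of $\U$ axiomatized by $C$, which coincides with $\W$ by Corollary \ref{rel BS1412}. Freeness of $\Free_\W(x_1,z)$ on two generators then yields a homomorphism $\Free_\W(x_1,z)\to\A$ whose composition with $\Free_\U(x_1,z)\twoheadrightarrow\Free_\W(x_1,z)$ equals the quotient $\Free_\U(x_1,z)\twoheadrightarrow\A$, forcing $\theta\subseteq\theta'$.

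For the reverse direction, given a finite generating set $C$ of $\theta$, let $\Theta'$ be the fully invariant closure of $C$ in $\Free_\U(X)$; clearly $\Theta'\subseteq\Theta$. Conversely, take $(a,b)\in\Theta$, so that $\W\models a\approx b$. By Corollary \ref{rel BS1412}, $\Theta'$ corresponds to the subvariety $\W'$ of $\U$ axiomatized by $C$, and applying Lemma \ref{basis lemma}.\ref{easy equiv} to $\W'$ reduces the task to showing each two-variable projection $(a_j,b_j)$ lies in $\Theta'$. After renaming its non-$z$ variable to $x_1$, each $(a_j',b_j')$ lies in $\theta$, hence in $\theta'$. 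Since every endomorphism of $\Free_\U(x_1,z)$ extends to an endomorphism of $\Free_\U(X)$ fixing the remaining generators, any derivation of $(a_j',b_j')$ in $\theta'$ lifts to a derivation in $\Theta'$; substituting back (again via full invariance) gives $(a_j,b_j)\in\Theta'$, so $\Theta=\Theta'$ is finitely generated.

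The main obstacle is the disparity between full invariance in $\Free_\U(x_1,z)$, which only allows substitutions by two-variable terms, and full invariance in $\Free_\U(X)$, which allows arbitrary substitutions. Lemma \ref{basis lemma} is precisely the tool that bridges this gap in the abelian Mal'cev setting, by expressing every multi-variable identity as a finite collection of two-variable ones via the Mal'cev term; this is what lets the two notions of finite generation coincide.
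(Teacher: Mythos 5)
Your proof is correct and follows essentially the same path as the paper: reduce via Corollary \ref{rel BS1412} to finite generation of the kernel in $\Free_\U(X)$, then pass to and from $\Free_\U(x_1,z)$ using the two-variable decomposition of Lemma \ref{basis lemma}. You simply spell out in more detail than the paper the key step that a finite two-variable generating set pulled down from $\Free_\U(X)$ already generates $\theta$ in $\Free_\U(x_1,z)$ (your quotient-algebra/freeness argument) and the converse lifting (your endomorphism-extension argument), where the paper merely asserts the conclusion.
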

\begin{proof}
    Let $X=\{z,x_1,x_2,\ldots\}$. Let $\theta_\W:=\tau(\Eq(\W)),$ where $\tau$ is the lattice isomorphism from Corollary \ref{rel BS1412}.  The claim is that $\theta$ is finitely generated as a fully invariant congruence of $\Free_\U(x,z)$ if and only if $\theta_\W$ is finitely generated as a fully invariant congruence of $\Free_\U(X)$. The latter is equivalent to $\W$ being finitely based over $\U$ by Corollary \ref{rel BS1412}. Thus, we will have proven the lemma if we can prove the claim.

    For a term $t(x_1,\ldots,x_k,z)$, let $t_0(z):=t(z,\ldots,z)$ and $t_i(x,z):=t(z,\ldots,z,x,z,\ldots,z)$ with $x$ in position $i$ for $1\leq i\leq k$.
    For terms $s,t$ we have $$\W\models s(x_1,\ldots,x_k,z)\approx t(x_1,\ldots,x_k,z)$$ if and only if, by Lemma \ref{basis lemma}, $$\W\models s_i\approx t_i,\text{ for all } 0\leq i\leq k.$$ Since $\theta=\theta_\W\cap F_\U(x_1,z)^2$, it follows that $\theta_\W=\Cg_\fuil(\theta).$

    So if $\theta$ is finitely generated as a fully invariant congruence, then so is $\theta_\W$. Conversely, if $\theta_\W$ is finitely generated, then $\theta_\W$ is generated by a finite subset $B$ of $\theta$. Then $B$ is also a finite generating set of $\theta$.

    We have shown that $\theta$ is finitely generated as a fully invariant congruence if and only if $\theta_\W$ is finitely generated as a fully invariant congruence. The latter is equivalent to $\W$ being finitely based over $\U$ by Corollary \ref{rel BS1412}. So we have proven the lemma.
    \end{proof}

\section{Proof of Main Theorem}\label{final proof}

We are now ready to prove Theorem \ref{the theorem}
        \begin{proof}[Proof of Theorem ~\ref{the theorem}]
            Let $\V$ be an abelian Mal'cev variety. 
            By Lemma \ref{term equivalence}, it suffices to consider the variety $\W\leq\U$.

            Let $\R_\U:=\langle F_\U^\id(x,z),+,\cdot\rangle$ be the ring described in Lemma \ref{small fring}.
            By Lemma \ref{Fring}.\ref{free ring}, $\R_\U$ is a free ring with generators $\unaryoperations.$
            Let $\phi\colon\Free_\U(x,z)\to \Free_\W(x,z)$ be the homomorphism defined by $t^\U(x,z)\mapsto t^\W(x,z)$ and let $\theta:=\ker\phi$.
            Let $I:=z/\theta\cap F_\U^\id(x,z)$.

            Let $\bm_\U:=\langle F_\U(z),+,R_\U\rangle$ be the $\R_\U$-module described in Lemma \ref{small fring}.
            By Lemma \ref{Fring}.\ref{free submodule}, $\bm_\U$ is a free $\R_\U$-module over $\yous.$
            Let $N:=z/\theta\cap F_\U(z)$.

            We have that $\W$ is finitely based if and only if $\theta$ is finitely generated as a fully invariant congruence of $\Free_\U(x,z)$ by Lemma \ref{fin basis fin con}. By Lemma \ref{Fring}.\ref{finite congruence}, the latter is equivalent to $I$ being a finitely generated ideal of $\R_\U$ and $N/IM_\U$ being a finitely generated $\R_\U$-module. This is further equivalent to $\R_\W\cong \R_\U/I$ being a finitely presented ring and $\bm_\W$ being a finitely presented $\R_\U$-module, equivalently a finitely presented module over $\R_\W\cong\R_\U/I$.
        \end{proof}
        \bibliographystyle{plain}
        \bibliography{bibly}
\end{document}